\newtheorem{thm}{Theorem}
\newtheorem{corl}[thm]{Corollary}
\newtheorem{lma}[thm]{Lemma} 
\newtheorem{prop}[thm]{Proposition}
\newtheorem{defn}[thm]{Definition}
\newtheorem{ex}[thm]{Example}
\newtheorem{rem}[thm]{Remark}
\newtheorem{ass}{Assumption}
\def\A{\mathcal{A}}
\def\B{\mathcal{B}}
\def\C{\mathbb{C}}
\def\dirac{\slashed{\partial}}
\def\eps{\varepsilon}
\def\H{\mathcal{H}}
\def\L{\mathcal{L}}
\def\R{\mathbb{R}}
\def\supp{\textup{supp}}
\DeclareMathOperator{\tr}{Tr}
\renewcommand{\phi}{\varphi}
\def\Z{\mathbb{Z}} 
\title{Perturbations and operator trace functions}
\author{Walter D. van Suijlekom}
\address{Institute for Mathematics, Astrophysics and Particle Physics,
Radboud University Nijmegen, Heyendaalseweg 135, 6525 AJ Nijmegen, The Netherlands; \\ e-mail: \texttt{waltervs@math.ru.nl}}
\date{\today, 2010}
\begin{document}
\bibliographystyle{plainmath}

\begin{abstract}
We study the spectral functional $A \mapsto \tr f(D+A)$ for a suitable function $f$, a self-adjoint operator $D$ having compact resolvent, and a certain class of bounded self-adjoint operators $A$. Such functionals were introduce by Chamseddine and Connes in the context of noncommutative geometry. Motivated by the physical applications of these functionals, we derive a Taylor expansion of them in terms of G\^ateaux derivatives. This involves divided differences of $f$ evaluated on the spectrum of $D$, as well as the matrix coefficients of $A$ in an eigenbasis of $D$. This generalizes earlier results to infinite dimensions and to any number of derivatives.
\end{abstract}
\maketitle

\section{Introduction}

The spectral action in noncommutative geometry \cite{C94} is given as the trace $\tr f(D)$ of a suitable function $f(D)$ of an unbounded self-adjoint operator $D$, which is assumed to have compact resolvent. One is interested in this trace function as $D$ is perturbed to $D+A$ where $A$ is a certain self-adjoint bounded operator. For instance, the so-called inner fluctuations of a spectral triple are of this type; they are central in the applications of noncommutative geometry to high-energy physics \cite{CC96,CC97,CCM07} (cf. also \cite{CM07}). A natural question that arises is what happens to the trace function when $D$ is perturbed to $D+A$. It is the goal of this paper to address this question.

We aim for a Taylor expansion of the spectral action by G\^ateaux deriving it with respect to $A$. As we will see, the context of finite dimensional noncommutative manifolds ({\it i.e.} spectral triples) allows for a derivation of results previously obtained only for finite dimensional (matrix) algebras \cite{Han06}. Our main result is the expansion:
 $$
S_D[A] = \sum_{n=0}^\infty \frac{1}{n} \sum_{i_1, \ldots i_n} A_{i_1 i_2}\cdots A_{i_n i_1} f' [\lambda_{i_1},\ldots,\lambda_{i_n}]
$$
where $f'[\lambda_{i_1},\ldots, \lambda_{i_n}]$ is the divided difference of order $n$ of $f'$ (cf. Defn \ref{defn:div-diff} below) evaluated on the spectrum of $D$, and $A_{ij}$ are the matrix coefficients with respect to an eigenbasis of $D$.

This paper is organized as follows. First, we recall in Section \ref{sect:pert} some results on perturbations of operators, in the setting of noncommutative geometry. Then, we give a precise definition of the spectral action functional in Section \ref{sect:trace}. In that section, we also recall the definition of divided differences and derive our main result on the Taylor expansion of the spectral action. We end with some conclusions and an appendix recalling a Theorem by Getzler and Szenes.

\section{Perturbations and spectral triples}
\label{sect:pert}

Recall that a {\it spectral triple} consists of an algebra $\A$ of bounded operators on a Hilbert space $\H$, together with a self-adjoint operator $D$ with compact resolvent such that the commutator $[D,a]$ is a bounded operator for all $a \in \A$. The key example is associated to a compact Riemannian spin manifold $M$: 
$$(C^\infty(M), L^2(M,S), \dirac),
$$ 
where $\dirac$ is a Dirac operator on the spinor bundle $S \to M$. Indeed, $\dirac$ is an elliptic differential operator of degree one and smooth functions satisfy
$$
\| [\dirac ,f] \| = \| f \|_{\textbf{Lip}} < \infty
$$
in the Lipschitz norm of $f$. 

In general, a spectral triple $(\A,\H,D)$ is said to be of {\it finite summability} if there exists an $n \geq 0$ such that $(1+D^2)^{-n/2}$ is a traceclass operator on $\H$. Let us start with a basic and well-known result.
\begin{lma}
\label{lma:traceheat}
Let $p$ be a polynomial on $\R$. Then for any $t>0$ the operator $p(D) e^{-t D^2}$ is traceclass.
\end{lma}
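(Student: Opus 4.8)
The plan is to reduce the statement to the functional calculus for the self-adjoint operator $D$ together with the finite-summability hypothesis, namely the existence of an $n\ge 0$ with $(1+D^2)^{-n/2}$ traceclass. Since $D$ is self-adjoint with compact resolvent, it admits an orthonormal eigenbasis $\{e_i\}$ of $\H$, say $D e_i = \lambda_i e_i$, with each eigenvalue of finite multiplicity and $|\lambda_i|\to\infty$. Both $p(D)$ and $e^{-tD^2}$ are then defined by the functional calculus and are simultaneously diagonalized in this basis, so that $p(D)e^{-tD^2}$ is the self-adjoint operator with eigenvalues $g(\lambda_i)$, where $g(x)=p(x)e^{-tx^2}$. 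Being a function of $D$ that vanishes at infinity it is compact, and its trace norm equals $\sum_i |g(\lambda_i)|=\sum_i |p(\lambda_i)|\,e^{-t\lambda_i^2}$; the whole task is to show that this sum is finite.

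First I would record the elementary analytic fact that a Gaussian dominates any polynomial: for fixed $t>0$ and the $n$ furnished by finite summability, the continuous function $x\mapsto |p(x)|\,e^{-t x^2}\,(1+x^2)^{n/2}$ tends to $0$ as $|x|\to\infty$, hence is bounded on $\R$ by some constant $C=C(p,t,n)$. This immediately yields the pointwise bound $|p(\lambda_i)|\,e^{-t\lambda_i^2}\le C\,(1+\lambda_i^2)^{-n/2}$ for every index $i$. Summing over $i$ and comparing with the eigenvalues of the positive operator $(1+D^2)^{-n/2}$ then gives
$$
\tr\big|p(D)e^{-tD^2}\big| = \sum_i |p(\lambda_i)|\,e^{-t\lambda_i^2} \le C\sum_i (1+\lambda_i^2)^{-n/2} = C\,\tr\,(1+D^2)^{-n/2} < \infty,
$$
which is exactly the finiteness of the trace norm, so $p(D)e^{-tD^2}$ is traceclass.

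An equivalent and perhaps cleaner packaging avoids eigenvalue sums altogether. One splits $e^{-tD^2}=e^{-tD^2/2}\,e^{-tD^2/2}$, observes that $e^{-tD^2/2}$ is traceclass by the same domination applied with exponent $n$, that $p(D)e^{-tD^2/2}$ is bounded since $\sup_{x}|p(x)e^{-tx^2/2}|<\infty$, and concludes via the ideal property that the product of a bounded operator with a traceclass operator is traceclass. Either route is short.

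As for difficulty, there is no genuine obstacle here: the entire content is the interplay between the Gaussian decay built into $e^{-tD^2}$ and the polynomial-type summability of the spectrum of $D$ encoded in finite summability. The only point that deserves a word of care is that this summability assumption must be invoked \emph{explicitly}, since it is precisely what forces the eigenvalues of $D$ to grow fast enough for the series to converge; for a merely compact resolvent with slowly growing spectrum the statement would fail, so I would make sure the hypothesis is present and used.
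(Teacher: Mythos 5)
Your proof is correct and rests on exactly the same idea as the paper's: the elementary bound $|p(x)|e^{-tx^2}\le C(1+x^2)^{-n/2}$, combined with the finite-summability hypothesis that $(1+D^2)^{-n/2}$ is traceclass --- indeed your second ``packaging'' (bounded operator times traceclass operator, via the ideal property of $\L^1(\H)$) is precisely the paper's factorization $p(D)e^{-tD^2}=\phi(D)(1+D^2)^{-n/2}$ with $\phi$ bounded, and your first route merely carries out the same comparison at the level of the eigenvalue sum. One incidental remark: the paper's displayed function $\phi(x)=p(x)(1+x^2)^{-n/2}e^{-tx^2}$ has a sign slip in the exponent (it must be $(1+x^2)^{+n/2}$ for the factorization to be an identity), a point your write-up implicitly gets right.
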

\begin{proof}
By finite summability and H\"olders inequality $(1+D^2)^{-n/2}$ is traceclass for some $n$. Thus, 
$$
p(D) e^{-tD^2} = \phi(D) (1+D^2)^{-n/2}
$$
with $\phi$ defined by functional calculus for the function
$$
\phi(x) = p(x) (1+x^2)^{-n/2} e^{-t x^2}.
$$
For $t>0$, this is a bounded function on $\R$ so that $\phi(D) (1+D^2)^{-n/2}$ is in the ideal $\L^1(\H)$ of traceclass operators as required.
\end{proof}
In particular, this applies to $p(x)=1$, {\it i.e.} finite summability implies so-called $\theta$-summability:
\begin{equation}
\label{eq:thetasumm}
\tr (e^{-tD^2}) < \infty, \qquad (t \in \R_+).
\end{equation}


\subsection{Fr\'echet algebra of smooth operators}
Given the derivation $\delta(\cdot) = [|D|,\cdot]$ on $\B(\H)$, there is a natural structure of a Fr\'echet algebra on the smooth domain of $\delta$. 
\begin{prop}
The following define a multiplicative family of semi-norms on $\B(\H)$:
$$
\| \delta^n(T) \| \qquad (T \in \B(\H)),
$$
indexed by $n \in \Z_{\geq 0}$. 
\end{prop}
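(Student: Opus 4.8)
The substance of this statement is the generalised Leibniz rule for $\delta$; the rest is formal. The plan is to record first that, because $\delta(\cdot)=[|D|,\cdot]$ is a derivation, one has $\delta(TS)=\delta(T)S+T\delta(S)$, and then to prove by induction on $n$ that
$$
\delta^n(TS)=\sum_{k=0}^n\binom{n}{k}\,\delta^k(T)\,\delta^{n-k}(S)
\qquad (T,S\in\B(\H)).
$$
The inductive step applies $\delta$ to this identity, uses the derivation property once more on each summand, and reassembles the result with Pascal's rule $\binom{n}{k}+\binom{n}{k-1}=\binom{n+1}{k}$. This is routine. The seminorm axioms themselves are immediate: each $\delta^n$ is linear and $\|\cdot\|$ is a norm, so $T\mapsto\|\delta^n(T)\|$ is a seminorm (only a seminorm, since e.g.\ $\delta(1)=0$).

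Taking operator norms in the Leibniz rule, and using submultiplicativity and the triangle inequality in $\B(\H)$, immediately yields
$$
\|\delta^n(TS)\|\le\sum_{k=0}^n\binom{n}{k}\,\|\delta^k(T)\|\,\|\delta^{n-k}(S)\|,
$$
which already shows that multiplication is jointly continuous for the topology generated by the family. Note, however, that the individual seminorm $\|\delta^n(\cdot)\|$ is \emph{not} submultiplicative: taking $S=1$ gives $\|\delta^n(T)\|$ on the left but $\|\delta^n(T)\|\,\|\delta^n(1)\|=0$ on the right. To realise the locally multiplicatively convex (Fr\'echet) structure by genuinely submultiplicative seminorms, I would therefore pass to the equivalent family
$$
p_n(T):=\sum_{k=0}^n\binom{n}{k}\,\|\delta^k(T)\|,
$$
which generates the same topology because $\|\delta^n(\cdot)\|\le p_n\le 2^n\max_{k\le n}\|\delta^k(\cdot)\|$.

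Substituting the Leibniz estimate into $p_n(TS)$ and reindexing the double sum by $(i,j)=(k,m-k)$, submultiplicativity $p_n(TS)\le p_n(T)\,p_n(S)$ reduces, after cancelling the common factor $n!/(i!\,j!)$, to the purely combinatorial inequality
$$
\frac{n!}{i!\,j!\,(n-i-j)!}\le\binom{n}{i}\binom{n}{j}
\qquad (i+j\le n),
$$
equivalently $\tfrac{(n-i)!}{(n-i-j)!}\le\tfrac{n!}{(n-j)!}$, which follows by comparing the two products of $j$ consecutive factors term by term, each left factor being bounded by the corresponding right factor since $n-i\le n$. The main obstacle is thus purely bookkeeping: obtaining the generalised Leibniz rule and the ensuing reindexing correctly, and checking the combinatorial inequality above. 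There is no analytic difficulty, since $|D|$ enters only through the algebraic derivation property of $\delta$ and the submultiplicativity of the operator norm.
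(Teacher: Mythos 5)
Your proof is correct, and its first half coincides exactly with the paper's proof: the paper's entire argument is the single display
$$
\| \delta^n(T_1 T_2) \| = \Bigl\| \sum_{k=0}^n \binom{n}{k} \delta^k (T_1)\, \delta^{n-k}(T_2) \Bigr\| \leq \sum_{k=0}^n \binom{n}{k} \| \delta^k (T_1) \|\, \|\delta^{n-k}(T_2)\|,
$$
i.e.\ the generalised Leibniz rule followed by the triangle inequality and submultiplicativity of the operator norm, with the inductive proof of the Leibniz rule and the seminorm axioms left implicit. Where you go genuinely beyond the paper is in your reading of ``multiplicative family'': you correctly observe that the individual seminorms $\|\delta^n(\cdot)\|$ are \emph{not} submultiplicative (your $S=1$ test is decisive, since $\delta(1)=0$), so the paper's displayed estimate only controls products by finite sums of products of seminorms; you then repair this by passing to the topologically equivalent family $p_n(T)=\sum_{k\leq n}\binom{n}{k}\|\delta^k(T)\|$, and your verification of $p_n(TS)\leq p_n(T)\,p_n(S)$ is sound: the reindexing gives the coefficient $n!/(i!\,j!\,(n-i-j)!)$ for $i+j\leq n$, the terms with $i+j>n$ on the right are nonnegative and may be discarded, and the termwise comparison $(n-i)!/(n-i-j)!\leq n!/(n-j)!$ is valid. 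So the paper settles for the weaker Leibniz-type estimate, which already suffices to make $\B^\infty(\H)$ a Fr\'echet algebra with jointly continuous multiplication, whereas your version additionally exhibits the topology as locally multiplicatively convex via genuinely submultiplicative seminorms --- a correct sharpening of the statement as literally phrased. One shared looseness: like the paper, you do not address that $\delta^k(T)$ need not be defined or bounded for arbitrary $T\in\B(\H)$, so the quantities are honest (finite-valued) seminorms only on $\B^n(\H)$, respectively $\B^\infty(\H)$; this is the paper's own abuse of language and not a defect specific to your argument.
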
 
\begin{proof}
The derivation property of $\delta$ yields 
$$
 \| \delta^n(T_1 T_2) \| = \left\|  \sum_{k=0}^n {n \choose k} \delta^k (T_1) \delta^{n-k}(T_2) \right\|  \leq \sum_{k=0}^n {n \choose k}\| \delta_k (T_1) \|\|\delta_{n-k}(T_2)\| .
$$
\end{proof}
We will denote 
$$
\B^n(\H) = \left\{ T \in \B(\H): \| \delta^k(T)\| < \infty \text{ for all } k \leq n \right\}.
$$
Evidently, we have
$$
\B^\infty(\H) \subset \cdots \subset \B^2(\H) \subset \B^1(\H) \subset\B(\H)
$$
where by definition $\B^\infty(\H) = \cap_{n \in \Z_{\geq0}} \B^n(\H)$.
\begin{rem}
\label{rem:connes-1-forms}
Recall that a spectral triple $(\A, \H,D)$ is called {\rm regular} if both the algebra $\A$ and $[D,\A]$ are in the smooth domain of $\delta$. This can thus be reformulated as: 
$$
\text{the algebra generated by } a \text{ and }[D,b] ~ (a,b \in \A) \text{ is a subalgebra of } \B^\infty(\H).
$$
In particular, the $\A$-bimodule of {\it Connes' differential one-forms} \cite[Sect. VI.1]{C94},
$$
\Omega^1_D(\A) = \left\{ \sum_j a_j [D,b_j] \right\},
$$ 
is a subspace of $\B^\infty(\H)$.
\end{rem}

\subsection{Perturbations of heat operators}
In this subsection, we take a closer look at the heat operator $e^{-tD^2}$ and its perturbations. First, recall that the standard $m$-simplex is given by an $m$-tuple $(t_1,\ldots,t_m)$  satisfying $0 \leq t_1 \leq \ldots \leq t_m \leq 1$. Equivalently, it can be given by an $m+1$-tuple $(s_0, s_1, \ldots, s_m)$ such that $s_0+\ldots +s_m=1$ and $0\leq s_i \leq 1$ for any $i=0,\ldots,m$. Indeed, we have $s_0=t_1$, $s_i=t_{i+1}-t_i$ and $s_m=1-t_m$ and, vice versa, $t_k=s_0+s_1 + \cdots s_{k-1}$. 

For later use, we prove the following bound, which already appeared in a slightly different form in \cite{GS89}.
\begin{prop}
\label{prop:bounds-simplex}
For any $m \geq 0$ and $0 \leq k \leq m+1$ we have the bound
$$
\int_{\Delta_m} d^m s ( s_0 \cdots s_{k-1})^{-1/2} \leq \frac{\pi^k}{(m-k)!}.$$
\end{prop}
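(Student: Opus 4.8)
The plan is to pass to the ordered coordinates $0 \le t_1 \le \cdots \le t_m \le 1$, in which $s_0 = t_1$ and $s_j = t_{j+1} - t_j$, so that the integrand becomes $\bigl(t_1 (t_2 - t_1) \cdots (t_k - t_{k-1})\bigr)^{-1/2}$. The decisive observation is that only the variables $t_1, \ldots, t_k$ occur in the integrand, while $t_{k+1}, \ldots, t_m$ enter solely through the ordering constraints. This splits the computation into two stages: first integrate the $m-k$ \emph{free} variables, then the $k$ \emph{singular} ones.

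First I would integrate out $t_{k+1}, \ldots, t_m$ over $t_k \le t_{k+1} \le \cdots \le t_m \le 1$. This is the volume of an $(m-k)$-simplex of side $1 - t_k$, namely $(1-t_k)^{m-k}/(m-k)!$, which I bound crudely by $1/(m-k)!$ using $0 \le t_k \le 1$. This produces precisely the factorial on the right-hand side and reduces the claim to the estimate $\int_{0 \le t_1 \le \cdots \le t_k \le 1} \bigl(t_1 (t_2-t_1)\cdots(t_k - t_{k-1})\bigr)^{-1/2}\, d^k t \le \pi^k$.

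For the remaining integral the basic mechanism is the scale-invariant Beta integral $\int_0^a \bigl(x(a-x)\bigr)^{-1/2}\, dx = B(\half,\half) = \pi$, which is where each power of $\pi$ originates. The cleanest way to finish is to recognize the integral over $\Delta_k$ as a Dirichlet integral with exponent $\half$ on the $k$ singular coordinates and $1$ on the last one, giving the exact value $\pi^{k/2}/\Gamma(k/2+1)$; since $\pi^{k/2} \le \pi^k$ and $\Gamma(k/2+1) \ge \pi^{-k/2}$ for every $k \ge 0$, this is bounded by $\pi^k$. Equivalently one may integrate the singular variables one at a time: the innermost integration in $t_1$ yields a clean factor $\pi$ by the Beta integral, after which a short induction controls the remaining factors.

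The hard part is that the chained difference structure $t_1,\, t_2 - t_1,\, \ldots,\, t_k - t_{k-1}$ couples successive variables, so the $k$ singular integrations do \emph{not} factor into independent Beta integrals; this is what forces either the global Dirichlet evaluation or a careful inductive estimate rather than a one-line product bound. One should also dispatch the degenerate endpoint $k = m+1$ separately, since there are then no free variables (the product involves all $m+1$ simplex coordinates, including $s_m = 1 - t_m$), so the factor $1/(m-k)!$ is read through the finite Dirichlet value $\pi^{(m+1)/2}/\Gamma\bigl((m+1)/2\bigr)$ rather than literally.
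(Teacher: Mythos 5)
Your proof is correct, but it takes a genuinely different route from the paper's. Both arguments begin by passing to the ordered coordinates $t_1 \leq \cdots \leq t_m$, but the paper integrates the \emph{singular} variables first, from the inside out: the $t_1$-integral is an exact Beta integral worth $\pi$, and each subsequent singular integration over $t_j$ is bounded by $\pi$ by inserting the factor $1 \leq t_j^{-1/2}$ (valid since $t_j \leq 1$) so as to recreate the Beta integral $\int_0^a (x(a-x))^{-1/2}\,dx = \pi$; the leftover free integrations then produce the factorial. You reverse the order: integrating out $t_{k+1},\ldots,t_m$ first gives the exact simplex volume $(1-t_k)^{m-k}/(m-k)!$, after which you evaluate the remaining singular integral \emph{exactly} as a Dirichlet (Liouville) integral, $\pi^{k/2}/\Gamma(k/2+1)$, and bound that by $\pi^k$ (your check $\Gamma(k/2+1) \geq \pi^{-k/2}$ is fine: $\Gamma \geq 0.885$ on $[1,\infty)$ while $\pi^{-k/2} \leq \pi^{-1/2} < 0.6$ for $k \geq 1$, with equality at $k=0$). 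Your route buys an exact evaluation, hence the strictly sharper bound $\pi^{k/2}/\bigl(\Gamma(k/2+1)\,(m-k)!\bigr)$, and it treats the endpoint $k=m+1$ more honestly than the paper, whose statement and proof tacitly assume $k \leq m$ (for $k=m+1$ the factor $1/(m-k)!$ is meaningless read literally, while your Dirichlet value $\pi^{(m+1)/2}/\Gamma\bigl((m+1)/2+ 1\bigr)$ stays finite). The paper's route is more elementary, needing only the single Beta integral and no Gamma-function bookkeeping; note also that your worry about the chained differences not factoring applies only to the iterated $t$-integrations — in the $s$-coordinates the singular part is a genuine Dirichlet integral and the variables do decouple globally, and the paper's insertion trick is precisely its elementary substitute for that global decoupling.
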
 
\begin{proof}
In terms of the parameters $t_i$ for the $m$-simplex, we have to find an upper bound for
$$
\int^1_0 dt_m \int^{t_m}_0 dt_{m-1} \cdots \int^{t_2}_0 dt_1 \frac{1}{\sqrt{t_1 (t_2-t_1)\cdots (t_k-t_{k-1})}}
$$
where $t_{m+1} \equiv 1$.
First, note that by a standard substitution
$$
\int^{t_2}_0 dt_1 \frac{1}{\sqrt{t_1 (t_2-t_1)}} = \pi.
$$
For the subsequent integral over $t_2$:
$$
\int^{t_3}_0 dt_2 \frac{1}{\sqrt{t_3-t_2}} \leq \int^{t_3}_0 dt_2 \frac{1}{\sqrt{t_2(t_3-t_2)}} = \pi
$$
since $t_2 \leq 1$. This we can repeat $k$ times, leaving us with the integral
$$
\int^1_0 dt_m \int^{t_m}_0 dt_{m-1} \cdots \int^{t_{k+1}}_0 dt_k = \frac{1}{(m-k)!}.
$$
\end{proof}

\begin{lma}
\label{lma:heatoperator}
Let $A$ be a bounded operator and denote $D_A = D+A$. Then
$$
e^{-t (D_A)^2} = e^{-tD^2} - t \int_0^1 ds ~ e^{-st (D_A)^2} P(A) e^{-(1-s)t D^2}
$$
with $P(A)= DA+AD+A^2$.
\end{lma}
\begin{proof}
Note that $e^{-t D_A^2}$ is the unique solution of the Cauchy problem
$$
\left\{ \begin{array}{r} \left( d_t + D_A \right) u(t) = 0 \\ u(0)=1 \end{array} \right.
$$
with $d_t = d/dt$. Using the fundamental theorem of calculus, we find that 
\begin{multline*}
d_t \left[ e^{-tD^2} - \int_0^t dt' e^{-(t-t')D_A^2} P(A) e^{-t'D^2} \right]
\\
= -D_A^2 \left( e^{-tD^2} -  \int_0^t dt' e^{-(t-t')D_A^2} P(A) e^{-t'D^2}  \right)
\end{multline*}
showing that the bounded operator $e^{-tD^2} - \int_0^t dt' e^{-(t-t')D_A^2} P(A) e^{-t'D^2}$ also solves the above Cauchy problem. 
\end{proof}

The following estimates were proved in a slightly different form in \cite{GS89}. 
\begin{lma}

\label{lma:estimates}
If the operators $A, A_i$ are bounded, and $\alpha_i \in \{0,1\}$ are such that $\sum_i \alpha_i =k$, then
\begin{multline*}
\left | \int_{\Delta_n}  \tr A_0 |D_A|^{\alpha_0} e^{-s_0 t D_A^2} 
A_1 |D|^{\alpha_1} e^{-s_1 t D^2} \cdots A_n |D|^{\alpha_n} e^{-s_n t D^2} 
d^ns 
\right| \\
\leq \frac{  
\| A_0 \| \cdots \|A_n\| \tr e^{-(1-\epsilon) t D^2}}{(n-k)!(\pi^{-2}\epsilon t)^{k/2}} 
\end{multline*}
for any $0 < \epsilon <1$.
\end{lma}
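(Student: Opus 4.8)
The plan is to separate the integrand into three kinds of factors: the operator norms $\|A_i\|$, which factor out for free; the $k$ singular factors $|D|^{\alpha_i}$ (and $|D_A|^{\alpha_0}$), which I damp against a small slice of each heat kernel and bound in operator norm; and the remaining heat kernels, whose trace norm must reproduce $\tr e^{-(1-\epsilon)tD^2}$. To this end I reserve a fraction $\epsilon$ of each (commuting) heat factor,
$$e^{-s_i t D^2} = e^{-\epsilon s_i t D^2}\, e^{-(1-\epsilon)s_i t D^2},$$
and likewise for $D_A$ in the slot $i=0$, using the first piece to absorb the singular factor and keeping the second for the trace estimate.

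The first ingredient is the elementary functional-calculus bound
$$\bigl\| |D|^{\alpha_i} e^{-\epsilon s_i t D^2} \bigr\| = \sup_{x \ge 0} x^{\alpha_i} e^{-\epsilon s_i t x^2} \le (\epsilon s_i t)^{-\alpha_i/2},$$
valid verbatim for $D_A$ (the sharp constant $(2e)^{-\alpha_i/2} \le 1$ is discarded). I then apply the H\"older inequality for Schatten norms, $\|C_1 \cdots C_N\|_1 \le \prod_j \|C_j\|_{p_j}$ with $\sum_j p_j^{-1}=1$, to the whole product: the factors $A_i$ and $|D|^{\alpha_i} e^{-\epsilon s_i t D^2}$ are assigned the exponent $\infty$ (costing nothing), and the leftover heat kernels $e^{-(1-\epsilon)s_i t D^2}$ the exponent $p_i = 1/s_i$. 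The budget balances since $\sum_i s_i = 1$, and a direct computation gives $\| e^{-(1-\epsilon)s_i t D^2}\|_{1/s_i} = \bigl[\tr e^{-(1-\epsilon)t D^2}\bigr]^{s_i}$. Combining, for each fixed $s$ the integrand is bounded by
$$\Bigl(\prod_i \|A_i\|\Bigr)(\epsilon t)^{-k/2}\Bigl(\prod_{i:\alpha_i=1} s_i^{-1/2}\Bigr)\bigl[\tr e^{-(1-\epsilon)t D_A^2}\bigr]^{s_0}\prod_{i=1}^n \bigl[\tr e^{-(1-\epsilon)t D^2}\bigr]^{s_i},$$
where I have used $\sum_i \alpha_i = k$ to collect the factor $(\epsilon t)^{-k/2}$.

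The heat traces are independent of $s$, so after bounding the weighted geometric mean by the larger of the two traces they factor out of the simplex integral, and the only $s$-dependence left is $\prod_{i:\alpha_i=1} s_i^{-1/2}$. By the permutation invariance of the standard simplex this integral equals $\int_{\Delta_n}(s_0\cdots s_{k-1})^{-1/2}\,d^n s$, which Proposition \ref{prop:bounds-simplex} bounds by $\pi^k/(n-k)!$. Multiplying the pieces yields exactly $(n-k)!^{-1}(\pi^{-2}\epsilon t)^{-k/2}\prod_i\|A_i\|$ times a heat trace, which is the claimed estimate.

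The routine operator-norm and simplex steps are straightforward; the one genuine obstacle is the slot $i=0$, which carries $D_A$ rather than $D$, so the reserved heat kernels no longer commute and do not collapse cleanly to $e^{-(1-\epsilon)tD^2}$. The H\"older step isolates the discrepancy in the factor $\bigl[\tr e^{-(1-\epsilon)tD_A^2}\bigr]^{s_0}$, and the remaining task is to dominate $\tr e^{-(1-\epsilon)tD_A^2}$ by $\tr e^{-(1-\epsilon)tD^2}$. Since $A$ is bounded one has, on the smooth domain, an operator inequality $D_A^2 \ge (1-\delta)D^2 - C_\delta$ controlling $P(A) = DA + AD + A^2$, from which $\tr e^{-\beta D_A^2}$ is dominated by a constant multiple of $\tr e^{-\beta' D^2}$ with $\beta' < \beta$; both are finite by $\theta$-summability \eqref{eq:thetasumm}. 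Carrying this comparison through, and absorbing its constants (which is presumably why the estimate holds only ``in a slightly different form''), is where the actual work lies.
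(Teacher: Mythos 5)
Your proposal is correct and follows essentially the same route as the paper: split each heat factor as $e^{-s_i t D^2} = e^{-\epsilon s_i t D^2}e^{-(1-\epsilon)s_i t D^2}$, absorb the singular factors $|D|^{\alpha_i}$ via the functional-calculus bound, apply H\"older with exponents $s_i^{-1}$, and finish with Proposition \ref{prop:bounds-simplex}. The one step you flag as the ``actual work''---dominating $\tr e^{-\beta t D_A^2}$ by $\tr e^{-\beta' t D^2}$ via the completing-the-square operator inequality for $P(A)$---is precisely Theorem C of Getzler--Szenes, which the paper invokes as \eqref{thmC} and proves in Appendix \ref{sect:theoremC} by exactly the argument you sketch (including the $\epsilon/2$ parameter adjustment and the factor $e^{(1+2/\epsilon)t\|A\|^2}$, which the paper's final statement likewise suppresses).
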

\begin{proof}
Recall H\"older's inequality:
\begin{equation}
\label{eq:holder}
\left| \tr (T_0 \cdots T_n) \right| \leq \| T_0 \|_{s_0^{-1}} \cdots \| T_n \|_{s_n^{-1}}
\end{equation}
when $s_0 + \cdots + s_n =1$. Also, we estimate for some arbitrary $0< \epsilon < 1$
\begin{align*}
\left\| A_i e^{-s_i t D^2} \right\|_{s_i^{-1}}& \leq \| A_i \| \left(\tr e^{-t D^2}\right)^{s_i} \leq \| A_i \| \left(\tr e^{-(1-\epsilon) t D^2}\right)^{s_i}\\
\left\| A_i |D| e^{-s_i t D^2} \right\|_{s_i^{-1}} & \leq \| A_i \| \left\| |D| e^{-\epsilon s_i t D^2 }\right\|  \left(\tr e^{-(1-\epsilon)t D^2}\right)^{s_i} \\
 &\leq (\epsilon s_i t)^{-1/2} \| A_i \| \left(\tr e^{-(1-\epsilon) t D^2}\right)^{s_i}
\end{align*}
writing $e^{-st D^2} = e^{-\epsilon st D^2} e^{-(1-\epsilon)st D^2} $. We have used Lemma \ref{lma:traceheat} and the fact that
$$
\left\|  e^{-\epsilon s tD^2} \right\| \leq 1; \qquad 
\left\| |D|   e^{-\epsilon s tD^2}\right \| \leq  \sup_{x \in \R_+} \{ xe^{-\epsilon st x^2} \} =(2e \epsilon st)^{-1/2}.
$$
Moreover, Theorem C in \cite{GS89} (cf. Appendix \ref{sect:theoremC}) gives
\begin{equation}
\label{thmC}
\tag{$\ast$}
 \tr e^{-t (1-\epsilon/2) (D_A)^2} \leq e^{(1+ 2/\epsilon) t \| A\|^2}  \tr e^{-t (1-\epsilon) D^2}.
\end{equation}
This further yields
\begin{align*}
\left\| A_0 |D_A| e^{-s_0 t D_A^2} \right\|_{s_0^{-1}}  &\leq \| A_0 \| \left\| |D_A| e^{-\epsilon/2 s_i t D_A^2 }\right\|  \left(\tr e^{-(1-\epsilon/2)t D_A^2}\right)^{s_i} \\
&\leq (e \epsilon s_0 t)^{-1/2} e^{(1+2/\epsilon)t \|A\|^2} \| A_0\| \left(\tr e^{-(1-\epsilon) t D^2}\right)^{s_0}.
\end{align*}
Combining these estimates with \eqref{eq:holder}, we obtain for instance in the case that the first $k$ $\alpha_i$ are nonzero ({\it i.e.} $\alpha_0= \cdots =\alpha_{k-1}=1$):
\begin{multline*}
\left | \tr A_0 |D_A|^{\alpha_0} e^{-s_0 t D_A^2} A_1 |D|^{\alpha_1} e^{-s_1 t D^2} \cdots A_n |D|^{\alpha_n} e^{-s_n t D^2}  
\right| \\
\leq \frac{  \| A_0 \| \cdots \|A_n\|}{s_0 \cdots s_k (\epsilon t)^{k/2}} \tr e^{-(1-\epsilon) t D^2}
\end{multline*}
making use of the fact that $s_0+s_1+ \cdots s_n=1$. The bounds of Proposition \ref{prop:bounds-simplex} complete the proof.
\end{proof}

Let us introduce the following convenient notation (cf. \cite{GS89}). If $A_0, \ldots, A_n$ are operators, we define a $t$-dependent quantity by
\begin{equation}
\label{brackets}
\left\langle A_0, \ldots, A_n \right\rangle_n := t^{n} \tr \int_{\Delta_n} 
A_0 e^{-s_0tD^2} A_1 e^{-s_1tD^2} \cdots A_n e^{-s_ntD^2}  d^ns.
\end{equation}
Note the difference in notation with \cite{GS89}, for which the same symbol is used for the supertrace of the same expression, rather than the trace. Also, we are integrating over the `inflated' $n$-simplex $t \Delta^n$, yielding the factor $t^n$. The forms $\langle A_0, \ldots, A_n\rangle$ satisfy, {\it mutatis mutandis}, the following properties.
\begin{lma}{\cite{GS89}}
In each of the following cases, we assume that the operators $A_i$ are such that each term is well-defined.
\begin{enumerate}
\item $\langle A_0, \ldots, A_n \rangle_n = \langle A_i, \ldots, A_n,\ldots, A_{i-1} \rangle_n $
\item $\langle A_0, \ldots, A_n \rangle_n = \sum_{i=0}^n \langle 1, \ldots, A_i ,\ldots, A_n, A_0, \ldots, A_{i-1} \rangle_n $
\item $\sum_{i=0}^n \langle A_0, \ldots,[D,A_i],\ldots A_n \rangle_n=0$
\item $\langle A_0, \ldots,[D^2,A_i], \ldots, A_n \rangle_n = \langle A_0, \ldots, A_{i-1}A_i,,\ldots, A_n \rangle_{n-1} \\$
$ { } \qquad \qquad \qquad \qquad \qquad\quad \quad- \langle A_0, \ldots,A_i A_{i+1},\ldots, A_n \rangle_{n-1}$
\end{enumerate}
\end{lma}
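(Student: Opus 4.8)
The plan is to prove the four properties of the bracket $\langle A_0,\ldots,A_n\rangle_n$ directly from the defining integral \eqref{brackets}, treating each as a consequence of either a symmetry of the simplex, cyclicity of the trace, or integration by parts in the heat semigroup. I would handle them in the order (1), (2), (3), (4), since the later identities reuse the manipulations of the earlier ones.

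For property (1), the cyclic invariance, I would combine the trace's cyclicity with a relabelling of the simplex coordinates. Writing the bracket out, cyclically permuting $A_0 e^{-s_0tD^2}$ to the back under the trace turns the integrand into $A_1 e^{-s_1tD^2}\cdots A_n e^{-s_ntD^2} A_0 e^{-s_0tD^2}$; the claim then follows once I observe that the barycentric coordinates $(s_0,\ldots,s_n)$ with $\sum_i s_i = 1$ parametrize a simplex that is invariant under the cyclic shift $s_i\mapsto s_{i+1}$, so the measure $d^ns$ and the integration domain are unchanged. For property (2), I would exploit the same symmetric description of $\Delta_n$ together with the identity that inserting a $1$ and summing over its position amounts to reconstructing the original bracket: each term on the right is a bracket of order $n$ in which one slot holds the identity, and integrating the heat factors against the simplex measure collapses the sum by the semigroup property $e^{-s_itD^2}e^{-s_{i+1}tD^2}=e^{-(s_i+s_{i+1})tD^2}$. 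The combinatorial bookkeeping of matching the inflated-simplex factors $t^n$ across orders is the delicate point here.

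Properties (3) and (4) are the ones where the differential structure enters. For (3), I would use that $[D,\,\cdot\,]$ acts as a derivation and that $D$ commutes with $e^{-s_itD^2}$; summing $\langle A_0,\ldots,[D,A_i],\ldots,A_n\rangle_n$ over $i$ produces a telescoping total commutator $[D,\,A_0e^{-s_0tD^2}\cdots A_ne^{-s_ntD^2}]$ under the trace, whose trace vanishes because $\tr[D,X]=0$ for trace-class $X$ (which is guaranteed by Lemma \ref{lma:traceheat} and the estimates of Lemma \ref{lma:estimates}). For (4), the key is the identity $[D^2,A_i]e^{-s_itD^2}$ combined with the fact that $-\partial_{s_i}$ brings down a factor of $tD^2$ from $e^{-s_itD^2}$; rewriting $\langle A_0,\ldots,[D^2,A_i],\ldots,A_n\rangle_n$ as a derivative in the simplex variables and integrating by parts over $\Delta_n$ yields boundary terms in which two adjacent heat factors merge, producing precisely the two order-$(n-1)$ brackets $\langle\ldots,A_{i-1}A_i,\ldots\rangle_{n-1}$ and $\langle\ldots,A_iA_{i+1},\ldots\rangle_{n-1}$ with opposite signs.

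The main obstacle I anticipate is justifying the integration by parts and the trace-of-commutator vanishing rigorously in the unbounded setting: $D$ and $D^2$ are unbounded, so each manipulation must be carried out inside a trace against enough heat-kernel regularization to keep every partial product trace-class. I would therefore lean on Lemma \ref{lma:traceheat} to guarantee that operators of the form $p(D)e^{-tD^2}$ are trace-class, and on the uniform estimates of Lemma \ref{lma:estimates} to control the simplex integrals and to license interchanging $\tr$, $\partial_{s_i}$, and $\int_{\Delta_n}$ by dominated convergence. Once these analytic points are secured, each of the four identities reduces to the formal algebraic computation sketched above; since these are the standard Getzler--Szenes manipulations adapted from the supertrace to the trace, I expect the \emph{mutatis mutandis} nature of the argument to be exactly the replacement of the graded commutator by the ordinary one throughout.
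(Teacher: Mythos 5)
Your reconstruction is sound, and it is in fact the only proof on offer: the paper proves nothing here, quoting the lemma from Getzler--Szenes \cite{GS89} with the remark that the supertrace identities carry over \emph{mutatis mutandis} to the trace. Your mechanisms are the right ones: trace cyclicity plus the cyclic symmetry of the barycentric coordinates $(s_0,\ldots,s_n)$ for (1); the telescoping total commutator $[D,\,A_0e^{-s_0tD^2}\cdots A_ne^{-s_ntD^2}]$ for (3), where $\tr[D,X]=0$ is licensed because the heat factors (for $s_i>0$, a set of full measure) make the relevant products trace-class by Lemma~\ref{lma:traceheat}; and Duhamel-type integration by parts in the simplex variables for (4), where the factor $tD^2$ brought down by $\partial_{s_i}$ exactly absorbs the mismatch between the prefactors $t^n$ and $t^{n-1}$ --- a check at $n=1$, giving $\tr(A_0A_1e^{-tD^2})-\tr(A_1A_0e^{-tD^2})$ on both sides, confirms the signs. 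Dropping the Koszul signs of \cite{GS89} is indeed all that \emph{mutatis mutandis} amounts to.

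The one point you flag as ``delicate bookkeeping'' in (2) is a genuine normalization defect, but it sits in the paper's statement rather than in your method. Under the convention \eqref{brackets} the identity as printed cannot balance powers of $t$: already for $n=0$ one has
\begin{equation*}
\langle 1, A_0\rangle_1 \;=\; t\int_0^1 \tr\bigl(A_0 e^{-tD^2}\bigr)\,ds \;=\; t\,\langle A_0\rangle_0 ,
\end{equation*}
and note also that the right-hand brackets have $n+2$ slots, so by the subscript convention used elsewhere in the paper (e.g.\ in Proposition~\ref{prop:sa-n-der}) they should carry subscript $n+1$. The correct statement is $t\,\langle A_0,\ldots,A_n\rangle_n = \sum_{i=0}^n \langle 1, A_i,\ldots,A_n,A_0,\ldots,A_{i-1}\rangle_{n+1}$, and your semigroup argument proves precisely this: by (1) and the semigroup law, the $i$-th term is an integral over $\Delta_{n+1}$ whose outer heat factors merge into $e^{-(s_0+s_{n+1})tD^2}$; integrating out one variable weights the merged slot by its length, and summing over $i$ these weights add up to $\sum_j s_j = 1$, i.e.\ to the missing factor of $t$ on the inflated simplex. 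With that correction, and the dominated-convergence justifications you already list (via Lemmas~\ref{lma:traceheat} and~\ref{lma:estimates}) for interchanging $\tr$, $\partial_{s_i}$ and $\int_{\Delta_n}$, your plan goes through.
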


\subsection{G\^ateaux derivatives}
As a preparation for the next section, we recall the notion of G\^ateaux derivatives, referring to the excellent treatment \cite{Ham82} for more details.

\begin{defn}
The {\it G\^ateaux derivative} at $x \in X$ of a map $F: X \to Y$ between locally convex topological vector spaces is defined for $h \in X$ by
\begin{align*}
F'(x)(h) = \lim_{u \to 0} \frac{F(x+uh) - F(x)}{u}.
\end{align*}
\end{defn}
In general, the map $F'(x)(\cdot)$ is not linear, in contrast with the Fr\'echet derivative. However, if $X$ and $Y$ are Fr\'echet spaces, then the G\^ateaux derivatives actually defines a linear map $F'(x)(\cdot)$ for any $x \in X$ \cite[Theorem 3.2.5]{Ham82}. In this case, higher order derivatives are denoted as $F'', F'''$ {\it et cetera}, or more conveniently as $F^{(k)}$ for the $k$-th order derivative. The latter will be understood as a linear bounded operator from $X \times \cdots \times X$ ($k+1$ copies) to $Y$.
\begin{thm}[Taylor's formula with integral remainder]
\label{thm:taylor}
For a G\^ateaux $k+1$-differentiable map $F: X\to Y$ between Fr\'echet spaces $X$ and $Y$ it holds for $x, a \in X$ that
\begin{multline*}
F(x) = F(a) + F'(a)(x-a) + \frac{1}{2!} F''(a)(x-a, x-a) + \cdots \\
 + \frac{1}{n!} F^{(k)} (a)(x-a, \ldots, x-a) + R_k(x)
\end{multline*}
with integral remainder given by 
$$
R_k(x) = \frac{1}{k!} \int_0^1 F^{(k+1)}(a+t(x-a)) ((1-t)h, \ldots, (1-t)h, h) dt.
$$
\end{thm}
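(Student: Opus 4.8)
The plan is to reduce the statement to the classical one-dimensional Taylor theorem with integral remainder by slicing along the segment from $a$ to $x$. Write $h = x-a$ and define the $Y$-valued curve $g\colon [0,1] \to Y$ by $g(t) = F(a + th)$. Since $X$ and $Y$ are Fr\'echet spaces, the G\^ateaux derivatives of $F$ are (bounded multi-)linear in their directional arguments and the chain rule of \cite{Ham82} applies, so I would first establish by induction on $j$ that $g$ is $(k+1)$-times differentiable with $g^{(j)}(t) = F^{(j)}(a+th)(h,\ldots,h)$, where $F^{(j)}(a+th)$ is evaluated on $j$ copies of the fixed direction $h$. The inductive step differentiates $F^{(j-1)}(a+th)(h,\ldots,h)$ in $t$: the base point $a+th$ contributes the factor $\tfrac{d}{dt}(a+th)=h$ through the chain rule, producing one extra directional slot equal to $h$, while the remaining slots stay fixed.

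With this identification the theorem becomes the ordinary Taylor formula for a curve in a locally convex space, namely
$$g(1) = \sum_{j=0}^{k} \frac{1}{j!}\, g^{(j)}(0) + \frac{1}{k!} \int_0^1 (1-t)^k\, g^{(k+1)}(t)\, dt,$$
which I would prove by induction on $k$. The base case $k=0$ is precisely the fundamental theorem of calculus for curves in Fr\'echet spaces, $g(1) - g(0) = \int_0^1 g'(t)\, dt$, available from \cite{Ham82}. For the inductive step I integrate the remainder by parts: in $\int_0^1 (1-t)^{k-1} g^{(k)}(t)\, dt$ I take the primitive $-(1-t)^k/k$ of the explicit factor, whereupon the boundary term at $t=1$ vanishes and the one at $t=0$ produces exactly the missing coefficient $g^{(k)}(0)/k!$, leaving the next-order integral remainder. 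This uses only linearity of the integral and continuity of $t \mapsto g^{(k)}(t)$, both guaranteed by the differentiability hypothesis.

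Substituting $g(1) = F(x)$, $g(0) = F(a)$, $g^{(j)}(0) = F^{(j)}(a)(h,\ldots,h)$ and $g^{(k+1)}(t) = F^{(k+1)}(a+th)(h,\ldots,h)$ recovers the stated expansion (here $h = x-a$, and the coefficient printed as $\tfrac{1}{n!}$ should read $\tfrac{1}{k!}$). Finally the scalar factor $(1-t)^k$ is absorbed into $k$ of the $k+1$ directional arguments of the multilinear map $F^{(k+1)}(a+th)$, so that $(1-t)^k F^{(k+1)}(a+th)(h,\ldots,h) = F^{(k+1)}(a+th)((1-t)h,\ldots,(1-t)h,h)$, which is exactly the claimed remainder $R_k(x)$. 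The main obstacle is not the integration-by-parts bookkeeping but the justification of the chain-rule identity $g^{(j)}(t) = F^{(j)}(a+th)(h,\ldots,h)$ in the Fr\'echet category: one must know that G\^ateaux differentiation commutes with the affine reparametrisation $t \mapsto a+th$ and that the iterated derivatives are jointly continuous enough to be Riemann-integrated as $Y$-valued functions. Both facts are precisely what the calculus of \cite{Ham82} supplies, so I would invoke those results rather than reprove them.
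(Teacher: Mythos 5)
The paper offers no proof of this theorem: it is stated as imported background, with \cite{Ham82} cited a few lines earlier as the source for the whole calculus of G\^ateaux derivatives on Fr\'echet spaces (it is Hamilton's Theorem 3.5.6). Your proposal is correct and is in substance the standard argument that Hamilton's treatment rests on: slice along the segment via $g(t) = F(a+th)$, identify $g^{(j)}(t) = F^{(j)}(a+th)(h,\ldots,h)$ by the chain rule, prove the scalar-parameter Taylor formula $g(1) = \sum_{j=0}^{k} \tfrac{1}{j!} g^{(j)}(0) + \tfrac{1}{k!}\int_0^1 (1-t)^k g^{(k+1)}(t)\,dt$ by induction with integration by parts (your bookkeeping here checks out: the boundary term at $t=0$ yields exactly $g^{(k)}(0)/k!$), and finally absorb $(1-t)^k$ into $k$ of the $k+1$ slots by multilinearity, which reproduces the paper's remainder $R_k(x)$ verbatim. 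You also correctly flag the one genuine subtlety: the hypothesis ``G\^ateaux $(k{+}1)$-differentiable'' must be read as $C^{k+1}$ in Hamilton's sense, since bare pointwise G\^ateaux differentiability gives neither multilinearity of $F^{(j)}$ (that is \cite[Theorem 3.2.5]{Ham82}, which requires continuity of the derivative) nor the continuity of $t \mapsto g^{(k+1)}(t)$ needed for the $Y$-valued Riemann integral and the fundamental theorem of calculus; delegating these points to \cite{Ham82} is exactly the right move, and it matches how the paper itself uses the result. Your two editorial catches are also real defects in the statement as printed: the coefficient $\tfrac{1}{n!}$ should be $\tfrac{1}{k!}$, and $h$ in the remainder is undefined and should be $x-a$. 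In short: where the paper cites, you prove, and the proof you give is the canonical one.
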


\section{Trace functionals}
\label{sect:trace}
In this section, we consider trace functionals of the form $A \mapsto\tr f(D+A)$.
Here $D$ is the self-adjoint operator forming a finitely summable spectral triple $(\A,\H,D)$, and $A$ is a bounded operator. We derive a Taylor expansion of this functional in $A$.
Our main motivation comes from the spectral action principle introduced by Chamseddine and Connes \cite{CC96,CC97} and we define accordingly
\begin{defn}[Chamseddine--Connes \cite{CC97}]
The {\rm spectral action functional} $S_D[A]$ is defined by
$$
S_D[A] = \tr f\left( D+A \right) ; \qquad (A \in \B(\H)).
$$
\end{defn}
\noindent The square brackets indicate that $S_D[A]$ is considered as a functional of $A \in \B(\H)$. 

\begin{rem}
Actually, Chamseddine and Connes considered $S_D[A]$ for so-called gauge fields associated to the spectral triple $(\A,\H,D)$. These are self-adjoint elements $A$ in $\Omega^1_D(\A)$ which by Remark \ref{rem:connes-1-forms} is a subset of $\B^2(\H)$. 
\end{rem}

For the function $f$ we assume that it is a Laplace--Stieltjes transform:
$$
f(x) = \int_{t >0} e^{-t x^2}d \mu(t)
$$
for which we make the additional
\begin{ass}
\label{ass:laplace}
For all $\alpha>0, \beta>0, \gamma>0$ and $0 \leq \epsilon<1$, there exist constants $C_{\alpha\beta\gamma\epsilon}$ such that
$$
 \int_{t >0} \tr t^\alpha |D|^\beta e^{-t (\epsilon D^2 - \beta)} \left| d \mu(t) \right| < C_{\alpha\beta\gamma\epsilon}.
$$
\end{ass}

In view of Theorem \ref{thm:taylor}, we have the following Taylor expansion (around 0) in $A \in \B^2(\H)$ for the spectral action $S_D[A]$:
\begin{equation}
\label{eq:taylor-action}
S_D[A] = \sum_{n=0}^\infty \frac{1}{n!} S_D^{(n)}(0)(A, \ldots, A).
\end{equation}
Indeed, $S_D$ is Fr\'echet differentiable on $\B^2(\H)$ as the following Proposition establishes.
\begin{prop}
\label{prop:sa-n-der}
If $n=0,1,\ldots$ and $A \in \B^2(\H)$, then $S_D^{(n)}(0)(A,\ldots,A)$ exists and
\begin{multline*}
S_D^{(n)} (0)(A,\ldots, A) =n! \sum_{k=0}^n (-1)^k  \sum_{\eps_1, \ldots, \eps_k} \langle 1, (1-\eps_1) \{D,A\}+ \eps_1 A^2, \ldots,  \\
(1-\eps_k) \{D,A\}+ \eps_k A^2 \rangle_k ~ d\mu(t),
\end{multline*}
where the sum is over multi-indices $(\eps_1, \ldots, \eps_k) \in \{0,1\}^k$ such that $\sum_{i=1}^k (1+\eps_i) = n$.
\end{prop}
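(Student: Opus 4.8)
The plan is to reduce the $n$-th Gâteaux derivative to a one-parameter computation. Writing $g(u):=S_D[uA]=\int_{t>0}\tr e^{-t(D+uA)^2}\,d\mu(t)$, the diagonal derivative we seek satisfies $S_D^{(n)}(0)(A,\ldots,A)=g^{(n)}(0)$ whenever the latter exists, so it suffices to show that $g$ is $n$-times differentiable at $0$ and to read off the coefficient of $u^n$ in its expansion. First I would feed $uA$ into Lemma \ref{lma:heatoperator}, noting that $P(uA)=u\{D,A\}+u^2A^2$, and iterate the resulting Duhamel identity. The fully iterated expansion expresses the perturbed semigroup entirely through the unperturbed one, and in the notation \eqref{brackets} reads
$$
\tr e^{-t(D+uA)^2}=\sum_{k=0}^\infty(-1)^k\langle 1,P(uA),\ldots,P(uA)\rangle_k,
$$
with $k$ insertions of $P(uA)$; convergence of this series and vanishing of the Duhamel remainder (which still contains a perturbed semigroup and is controlled by the Theorem~C bound $(\ast)$ used in the proof of Lemma \ref{lma:estimates}) are part of the analytic justification below.

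The main obstacle is exactly this analytic justification: each bracket $\langle 1,P(uA),\ldots,P(uA)\rangle_k$ contains the unbounded operator $D$ through every factor $\{D,A\}$, so I must show that after integrating against $d\mu$ the series converges and may be differentiated termwise in $u$ near $u=0$. Here I would absorb every $D$ coming from a $\{D,A\}$ into an adjacent heat semigroup, using $\||D|e^{-\eps stD^2}\|\le(2e\eps st)^{-1/2}$; in the configurations where two such $D$'s would pile onto a single semigroup—e.g.\ a slot $AD$ immediately followed by a slot $DA$, producing $D^2e^{-s_itD^2}$ with its non-integrable $(s_it)^{-1}$ singularity—I would commute one of them through the intervening $A$, the commutator $[|D|,A]=\delta(A)$ being bounded precisely because $A\in\B^2(\H)$. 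After this redistribution each resulting term has at most one factor of $|D|$ per semigroup and is therefore estimated by Lemma \ref{lma:estimates}; the attendant simplex singularities are integrable by Proposition \ref{prop:bounds-simplex}, and the remaining $t$-dependence—the prefactor $t^k$ combined with the $2k-n$ factors $(\eps t)^{-1/2}$ from the absorbed $|D|$'s leaves the net power $t^{n/2}$—is integrated against $|d\mu|$ using Assumption \ref{ass:laplace}. This shows $g$ is smooth near $0$ with derivatives obtained by termwise differentiation.

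It then remains to extract the coefficient of $u^n$. Expanding each factor $P(uA)=u\{D,A\}+u^2A^2$ and using multilinearity of the bracket, the $k$-fold bracket splits into $2^k$ brackets indexed by $\eps=(\eps_1,\ldots,\eps_k)\in\{0,1\}^k$, the $i$-th slot carrying $\{D,A\}$ if $\eps_i=0$ and $A^2$ if $\eps_i=1$, and the corresponding term being homogeneous of degree $\sum_i(1+\eps_i)$ in $u$. Collecting those with $\sum_i(1+\eps_i)=n$—which forces $\lceil n/2\rceil\le k\le n$, so the sum over $k$ is finite—yields
$$
[u^n]\,g(u)=\sum_{k=0}^n(-1)^k\sum_{\eps:\sum_i(1+\eps_i)=n}\int_{t>0}\langle 1,(1-\eps_1)\{D,A\}+\eps_1A^2,\ldots,(1-\eps_k)\{D,A\}+\eps_kA^2\rangle_k\,d\mu(t).
$$
Since $g$ is smooth near $0$ by the previous paragraph we have $g^{(n)}(0)=n!\,[u^n]g(u)$; multiplying the display by $n!$ gives exactly the asserted formula for $S_D^{(n)}(0)(A,\ldots,A)$ and, in particular, establishes that all these derivatives exist.
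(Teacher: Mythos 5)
Your proposal is correct, but it takes a genuinely different route from the paper. The paper proceeds by induction on $n$: it differentiates the claimed formula once more, using Lemma \ref{lma:heatoperator} to differentiate the perturbed heat semigroup (producing an inserted $\{D,A\}$ slot with a sign flip) and differentiating the slots linear in $A$ (producing the $2(1-\eps_i)A^2$ terms), and then closes the induction with a counting argument: each admissible multi-index $\vec\eps~'$ with $\sum_i(1+\eps_i')=n+1$ arises from exactly $2(n+1-k)+(2k-n-1)=n+1$ predecessors, which is what upgrades $n!$ to $(n+1)!$. You instead resum the Duhamel identity completely, obtaining $\tr e^{-t(D+uA)^2}$ as a series of brackets with insertions of $P(uA)=u\{D,A\}+u^2A^2$, expand multilinearly over $\eps\in\{0,1\}^k$, and read off the coefficient of $u^n$; the factor $n!$ then comes for free from $g^{(n)}(0)=n!\,[u^n]g(u)$, so the paper's combinatorial count is replaced by the trivial bookkeeping that the term indexed by $\eps$ is homogeneous of degree $\sum_i(1+\eps_i)$. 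What your route buys is all orders at once and a transparent origin for the combinatorial coefficient; what it costs is a front-loaded analytic burden (convergence of the full series, vanishing of the Duhamel remainder via the Getzler--Szenes bound $(\ast)$, and termwise differentiation in $u$), which the paper's one-derivative-per-step induction largely sidesteps. You handle this burden with the correct tools --- Lemma \ref{lma:estimates}, Proposition \ref{prop:bounds-simplex}, Assumption \ref{ass:laplace} --- and you correctly identify the one real subtlety the estimates alone do not cover, namely two $D$'s from adjacent $\{D,A\}$ slots piling onto a single semigroup as $D^2e^{-s_itD^2}$, resolved by commuting one factor through $A$ at the cost of the bounded $\delta(A)$, which is exactly where the hypothesis $A\in\B^2(\H)$ earns its keep. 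One small imprecision: existence of $g^{(n)}(0)$ alone would not yield the iterated G\^ateaux derivative $S_D^{(n)}(0)(A,\ldots,A)$, as your opening sentence suggests; but since your estimates in fact give differentiability of $u\mapsto S_D[uA]$ on a whole neighborhood of $0$ at every order, the iterated directional derivatives along the ray exist and agree with $g^{(n)}(0)$, so the argument as a whole is sound.
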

\begin{proof}
We will prove this by induction on $n$; the case $n=0$ being trivial.
By definition of the G\^ateaux derivative and using Lemma \ref{lma:heatoperator}
\begin{align*}
S^{(n+1)}_D(0)(A,\ldots, A) &= n! \sum_{k=0}^n  \sum_{\eps_1, \ldots, \eps_k} \Bigg[
\sum_{i=1}^k  (-1)^{k+1} \langle 1, (1-\eps_1) \{D,A\}+ \eps_1 A^2,\\
&\qquad \qquad \qquad  \ldots, \underset{i}{\{ D, A\}}, \ldots, (1-\eps_k) \{D,A\}+ \eps_k A^2 \rangle_{k+1} \\
&\quad + \sum_{i=1}^k  (-1)^{k} \langle 1, (1-\eps_1) \{D,A\}+ \eps_1 A^2, \ldots, 2(1-\eps_i)A^2, \\
& \qquad \qquad \qquad \qquad  \ldots, (1-\eps_k) \{D,A\}+ \eps_k A^2 \rangle_{k} \Bigg] ~d\mu(t).
\end{align*}
The first sum corresponds to a multi-index $\vec{\eps}~'= (\eps_1, \ldots, \eps_{i-1},0,\eps_{i}, \ldots, \eps_k)$, the second sum corresponds to $\vec{\eps}~'= (\eps_1, \ldots, \eps_{i}+1, \ldots, \eps_k)$ if $\eps_i  = 0$, counted with a factor of $2$. In both cases, we compute that $\sum_j (1+\eps'_j) = n+1$. In other words, the induction step from $n$ to $n+1$ corresponds to inserting in a sequence of $0$'s and $1$'s (of, say, length $k$) either a zero at any of the $k+1$ places, or replace a $0$ by a $1$ (with the latter counted twice). In order to arrive at the right combinatorial coefficient $(n+1)!$, we have to show that any $\vec{\eps}~'$ satisfying $\sum_i (1+\eps_i')= n+1$ appears in precisely $n+1$ ways from $\vec{\eps}$ that satisfy $\sum_i (1+\eps_i)= n$. If $\vec{\eps}~'$ has length $k$, it contains $n+1-k$ times $1$ as an entry and, consequently, $2k-n-1$ a $0$. This gives (with the double counting for the $1$'s) for the number of possible $\vec\eps$:
$$
2(n+1-k) + 2k-n-1 = n+1
$$
as claimed. This completes the proof.
\end{proof}

\begin{ex}
\begin{align*}
S_D^{(1)}(0)(A)&= \int \bigg(- \langle 1, \{D,A\} \rangle_1\bigg) ~d\mu(t) \\
S_D^{(2)}(0)(A,A)&= 2\int\bigg( -\langle1,  A^2 \rangle_1 + \langle 1, \{D, A\} , \{D,A\} \rangle_2 \bigg) ~d\mu(t) \\
S_D^{(3)}(0)(A,A,A)&= 3! \int\bigg( \langle1,  A^2,\{D, A\} \rangle_2+ \langle1,  \{D, A\},A^2 \rangle_2 \\
& \qquad  \qquad \qquad - \langle 1, \{D, A\} , \{D,A\},\{D,A\} \rangle_3 \bigg) ~d\mu(t) \\
\end{align*}
\end{ex}

\subsection{Divided differences}
Recall the definition of and some basic results on divided differences.
\begin{defn} 
\label{defn:div-diff}
Let $g: \R \to \R$ and $x_0, x_1, \ldots x_n$ be distinct points on $\R$. The {\rm divided difference of order $n$} is defined by the recursive relations
\begin{align*}
g[x_0] &= g(x_0), \\
g[x_0,x_1, \ldots x_n] &= \frac{ g[x_1, \ldots x_n] -g[x_0,x_1, \ldots x_{n-1}]}{x_{n} - x_0}.
\end{align*}
On coinciding points we extend this definition as the usual derivative:
$$
g[x_0, \ldots,x \ldots, x \ldots x_n]:= \lim_{u \to 0} g[x_0, \ldots,x+u \ldots, x \ldots x_n]
$$
Finally, as a shorthand notation, we write for an index set $I =\{i_1, \ldots, i_n \}$:
$$
g[x_{I}] = g[x_{i_1}, \ldots, x_{i_n}]. 
$$
\end{defn}
Also note the following useful representation due to Hermite \cite{Her1878}. 
\begin{prop}
\label{hermite}
For any $x_0, \ldots, x_n \in \R$
$$
f[x_0, x_1, \ldots, x_n] = \int_{\Delta_n} f^{(n)} \left(s_0 x_0 + s_1 x_1 + \cdots + s_n x_n\right) d^ns.
$$
\end{prop}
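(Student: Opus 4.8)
The plan is to prove the formula by induction on $n$, at each step carrying out the innermost integration over one simplex coordinate and recognizing the two boundary contributions as divided differences of one lower order. I would first establish the identity for distinct points $x_0, \ldots, x_n$ and then recover the case of coinciding points by a continuity argument, which is consistent with the way Definition \ref{defn:div-diff} extends divided differences to repeated arguments.

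For the base case $n=0$ the $0$-simplex $\Delta_0$ is a single point with $s_0 = 1$, so the right-hand side reduces to $f(x_0) = f[x_0]$, matching Definition \ref{defn:div-diff}; equivalently one can start at $n=1$, where the substitution turning $\int_0^1 f'\big((1-s_1)x_0 + s_1 x_1\big)\,ds_1$ into an integral of $f'$ over $[x_0,x_1]$ gives $\big(f(x_1)-f(x_0)\big)/(x_1-x_0)$. For the inductive step I would parametrize $\Delta_n$ by the free coordinates $s_1, \ldots, s_n \geq 0$ with $s_1 + \cdots + s_n \leq 1$ and $s_0 = 1 - s_1 - \cdots - s_n$, and integrate over $s_n$ first. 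Fixing $s_1, \ldots, s_{n-1}$ and setting $\sigma = 1 - s_1 - \cdots - s_{n-1}$, the argument of $f^{(n)}$ is the affine function $\xi(s_n) = \sigma x_0 + s_1 x_1 + \cdots + s_{n-1} x_{n-1} + s_n (x_n - x_0)$, which satisfies $\xi'(s_n) = x_n - x_0$. Hence, assuming $x_0 \neq x_n$, the fundamental theorem of calculus yields
$$
\int_0^\sigma f^{(n)}\big(\xi(s_n)\big)\, ds_n = \frac{f^{(n-1)}\big(\xi(\sigma)\big) - f^{(n-1)}\big(\xi(0)\big)}{x_n - x_0}.
$$

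Here $\xi(0) = \sigma x_0 + s_1 x_1 + \cdots + s_{n-1} x_{n-1}$ is a convex combination of $x_0, \ldots, x_{n-1}$, while $\xi(\sigma) = s_1 x_1 + \cdots + s_{n-1} x_{n-1} + \sigma x_n$ is a convex combination of $x_1, \ldots, x_n$. Integrating the leftover coordinates $(s_1, \ldots, s_{n-1})$ over $\Delta_{n-1}$ and invoking the induction hypothesis on each term (using that the simplex integral does not depend on which barycentric coordinate is taken as dependent, i.e. on the ordering of the points) identifies the two pieces with $f[x_0, \ldots, x_{n-1}]$ and $f[x_1, \ldots, x_n]$ respectively. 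The right-hand side therefore equals $\big(f[x_1, \ldots, x_n] - f[x_0, \ldots, x_{n-1}]\big)/(x_n - x_0)$, which is precisely $f[x_0, \ldots, x_n]$ by the recursion in Definition \ref{defn:div-diff}. Finally, since $f^{(n)}$ is continuous and $\Delta_n$ is compact, the right-hand side is jointly continuous in $(x_0, \ldots, x_n)$, so the identity for distinct points passes to the limit of coinciding points, matching the limiting definition of the divided difference there.

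The main care required lies in the bookkeeping of the boundary step: checking that dropping $s_0$ (at $s_n = \sigma$) versus dropping $s_n$ (at $s_n = 0$) leaves in each case an integration over a genuine copy of $\Delta_{n-1}$ associated with the correct $n$-tuple of points, and that these match the two terms of the divided-difference recursion with the right orientation. No serious obstacle is expected beyond this combinatorial matching and the mild regularity assumption ($f \in C^n$) needed for the integrand and its continuity in the points.
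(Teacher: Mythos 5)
Your proof is correct. Note that the paper itself offers no proof of Proposition \ref{hermite} at all --- it simply states the formula and cites Hermite's 1878 paper --- so there is no in-text argument to compare against; what you have written is the classical Hermite--Genocchi induction, which is indeed the standard route. The key computation checks out: with $s_0 = 1 - s_1 - \cdots - s_n$ eliminated, the argument of $f^{(n)}$ is affine in $s_n$ with slope $x_n - x_0$, the fundamental theorem of calculus turns the innermost integral into the difference of $f^{(n-1)}$ evaluated at a convex combination of $x_1,\ldots,x_n$ (at $s_n = \sigma$, i.e.\ on the face $s_0=0$) and one of $x_0,\ldots,x_{n-1}$ (at $s_n = 0$), and the induction hypothesis applied to each term reproduces exactly the recursion of Definition \ref{defn:div-diff}. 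You are also right to flag the two genuine points of care, and you handle both: the identification of each boundary face with a standard copy of $\Delta_{n-1}$ uses the invariance of the simplex integral under permuting which barycentric coordinate is eliminated (a measure-preserving affine change of variables, harmless since the integrand depends only on the weighted combination of points), and the passage to coinciding points follows from joint continuity of the integral in $(x_0,\ldots,x_n)$ for $f \in C^n$, which matches precisely the limiting extension in Definition \ref{defn:div-diff}. One small observation: since the induction step for a distinct tuple only invokes the hypothesis on the two sub-tuples $x_0,\ldots,x_{n-1}$ and $x_1,\ldots,x_n$, which are again distinct, your strategy of proving the identity first for distinct points and extending by continuity afterwards is internally consistent and requires no circularity caveat.
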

As an easy consequence, we derive
$$
\sum_{i=0}^n f[x_0, \ldots, x_i,x_i, \ldots, x_n]
= f'[x_0, x_1, \ldots, x_n].
$$

\begin{prop}
\label{prop:div-diff:f-g}
For any $x_1, \ldots x_n \in \R$ we have for $f(x)= g(x^2)$:
\begin{align*}
f[x_0,\cdots,x_n]&= \sum_I  \left( \prod_{\{ i-1, i \} \subset I} (x_i+x_{i+1}) \right) g[x_I^2] 
\end{align*}
where the sum is over all ordered index sets $I= \{ 0 = i_0 < i_1<\ldots<i_k =n \}$ such that $i_{j} - i_{j-1} \leq 2$ for all $1 \leq j \leq k$ ({\it i.e.} there are no gaps in $I$ of length greater than 1).
\end{prop}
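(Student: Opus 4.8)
The plan is to recognise the claimed identity as the Fa\`a di Bruno (chain rule) formula for divided differences specialised to the quadratic $h(x)=x^2$, and to prove it through the matrix representation of divided differences (Opitz's formula). First I would introduce the $(n+1)\times(n+1)$ bidiagonal matrix $J=J(x_0,\ldots,x_n)$ with diagonal entries $x_0,\ldots,x_n$ and all superdiagonal entries equal to $1$. The basic fact to establish, by induction on $n$ directly from the recursion in Definition \ref{defn:div-diff}, is Opitz's formula: for sufficiently smooth $F$ one has $(F(J))_{ij}=F[x_i,\ldots,x_j]$ for $i\le j$ and $0$ otherwise, so that the top-right entry recovers $F[x_0,\ldots,x_n]$.

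Applying this with $F=f=g\circ h$ and using that functional calculus respects composition, $f(J)=g(h(J))=g(J^2)$; for polynomial $g$ this is just polynomial composition, and the general case follows by approximating $g$ in $C^n$ near $\{x_i^2\}$, divided differences being continuous in that topology. A direct computation shows that $M:=J^2$ is the banded upper-triangular matrix with $M_{ii}=x_i^2$, $M_{i,i+1}=x_i+x_{i+1}$, $M_{i,i+2}=1$ and $M_{ij}=0$ for $j>i+2$; equivalently $M_{i,i+r}=h[x_i,\ldots,x_{i+r}]$, which vanishes for $r\ge 3$ precisely because $h$ is quadratic. This vanishing is what forces the gaps in $I$ to have length at most $1$.

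It then remains to read off the $(0,n)$ entry of $g(M)$. For an upper-triangular matrix $M$ with (generically distinct) diagonal $d_i:=x_i^2$ there is a path expansion
\[
\big(g(M)\big)_{0n}=\sum_{0=l_0<l_1<\cdots<l_r=n}\Big(\prod_{s=1}^{r}M_{l_{s-1}l_s}\Big)\,g[d_{l_0},\ldots,d_{l_r}],
\]
which I would derive from the Neumann series for the resolvent $(zI-M)^{-1}$ together with the Cauchy-integral representation of divided differences (equivalently, it is Opitz's formula applied to the banded matrix $M$). Since $M_{l_{s-1}l_s}\neq 0$ only for steps $l_s-l_{s-1}\in\{1,2\}$, the admissible paths are exactly the index sets $I=\{0=l_0<\cdots<l_r=n\}$ with no gap exceeding $1$; a step of length $1$ contributes $x_{l_{s-1}}+x_{l_s}$, a step of length $2$ contributes $1$, and $g[d_{l_0},\ldots,d_{l_r}]=g[x_I^2]$. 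This is precisely the asserted formula.

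The main obstacle is not the combinatorics, which collapses to the two-term band of $J^2$, but the analytic justification of these operations for merely smooth $g$: establishing $g(J^2)=(g\circ h)(J)$ and the path expansion of $g(M)$, and handling the coincidences $x_i^2=x_j^2$ (that is, $x_i=-x_j$) where $M$ has repeated eigenvalues. Both are resolved by first proving the identity for polynomial or holomorphic $g$, where everything is elementary matrix algebra and the Cauchy integral applies, and then extending by density and the limiting convention for coincident nodes in Definition \ref{defn:div-diff}. Alternatively one may bypass the matrix machinery and argue by induction on $n$ from the divided-difference recursion, in which case the bookkeeping of how each subset $I$ splits according to membership of the endpoints $1$ and $n-1$ becomes the principal difficulty.
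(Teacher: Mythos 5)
Your proof is correct, but it takes a genuinely different route from the paper. The paper's proof is a two-step application of the Floater--Lyche chain rule for divided differences, cited from \cite{FL07}: for $f = g\circ\phi$ one has $f[x_0,\ldots,x_n] = \sum_{k}\sum_{0=i_0<\cdots<i_k=n} g[\phi(x_{i_0}),\ldots,\phi(x_{i_k})]\prod_{j}\phi[x_{i_j},\ldots,x_{i_{j+1}}]$, and for $\phi(x)=x^2$ the computation $\phi[x,y]=x+y$, $\phi[x,y,z]=1$, with all higher divided differences vanishing, kills every index set containing a gap of length exceeding $1$ and produces exactly the stated factors. Your Opitz-matrix argument in effect reproves this chain rule in the quadratic case: the band structure of $M=J^2$, namely $M_{i,i+r}=\phi[x_i,\ldots,x_{i+r}]$ vanishing for $r\geq 3$, encodes the same vanishing of higher divided differences of $\phi$, and your resolvent/Cauchy path expansion of $\bigl(g(M)\bigr)_{0n}$ is the matrix-level form of the Floater--Lyche sum (note the finite Neumann series, since the strictly upper-triangular part of $M$ is nilpotent, so the expansion is elementary for polynomial or holomorphic $g$). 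What your approach buys is self-containedness --- no appeal to \cite{FL07} --- at the cost of the analytic bookkeeping you correctly flag: defining $g(M)$ for merely $C^n$ functions $g$, justifying $g(J^2)=(g\circ\phi)(J)$, and handling confluent diagonal entries $x_i^2=x_j^2$ (i.e.\ $x_i=-x_j$). These are genuine but routine: both sides of the claimed identity are finite sums of divided differences, hence continuous in $g$ in the $C^n$ topology and continuous in the nodes (via the Hermite representation, Proposition \ref{hermite}), so proving the identity for polynomial $g$ at distinct nodes and passing to limits, as you propose, completes the argument. The paper's route is shorter only because \cite{FL07} absorbs precisely these issues; as a minor point, your formulation of the step-$1$ factor as $x_{l_{s-1}}+x_{l_s}$ is actually cleaner than the paper's statement, whose subscript convention in the product $\prod_{\{i-1,i\}\subset I}(x_i+x_{i+1})$ is off by one relative to its own examples.
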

\begin{proof}
This follows from the chain rule for divided difference: if $f = g \circ \phi$, then \cite{FL07}
$$
f[x_0, \ldots x_n] = \sum_{k=1}^n \sum_{ 0 = i_0 < i_1<\ldots<i_k =n } g[\phi(x_{i_0}), \ldots, \phi(x_{i_k})] \prod_{j=0}^{k-1} \phi[ x_{i_j}, \ldots, x_{i_{j+1}}].
$$
For $\phi(x) = x^2$ we have $\phi[x,y]= x+y$, $\phi[x,y,z]= 1$ and all higher divided differences are zero. Thus, if $i_{j+1} - i_j > 2$ then $\phi[ x_{i_j}, \ldots, x_{i_{j+1}}]=0$. In the remaining cases one has
$$
\phi[ x_{i_j}, \ldots, x_{i_{j+1}}]= \left\{ \begin{array}{lll}
x_{i_j}+ x_{i_{j+1}} &\text{if} &i_{j+1} - i_j =1 \\
1 &\text{if}& i_{j+1} - i_j =2 \\
\end{array}\right.
$$
and this selects in the above summation precisely the index sets $I$.
\end{proof}
\begin{ex}
For the first few terms, we have
\begin{align*}
f[x_0,x_1] &= (x_0+x_1) g[x_0^2, x_1^2]\\
f[x_0,x_1,x_2] &= (x_0+x_1)(x_1+x_2) g[x_0^2, x_1^2,x_2^2] + g[x_0^2,x_2^2] \\
f[x_0,x_1,x_2,x_3] &= (x_0+x_1)(x_1+x_2)(x_2+x_3) g[x_0^2,x_1^2, x_2^2,x_3^2]
\\ &\quad 
+ (x_2+x_3) g[x_0^2,x_2^2,x_3^2] + (x_0+x_1) g[x_0^2,x_1^2, x_3^2]
\end{align*}
\end{ex}

\subsection{Taylor expansion of the spectral action}
We fix a complete set of eigenvectors $\{ \psi_n \}_n$ of $D$ with respective eigenvalue $\lambda_n \in \R$, forming an orthonormal basis for $\H$. We also denote $A_{mn} := (\psi_m , A \psi_n)$ so that $\sum_{m,n} A_{mn} | \psi_m )( \psi_n |$ converges to $A$ in the weak operator topology. 
\begin{thm}
\label{thm:sa-n-der-dd}
If $f$ satisfies Assumption \ref{ass:laplace} and $A \in \B^2(\H)$, then 
$$
S_D^{(n)}(0)(A,\ldots,A) = n! \sum_{i_1, \ldots, i_n} A_{i_n i_1} A_{i_1 i_2} \cdots A_{i_{n-1} i_n} f [ \lambda_{i_p}, \lambda_{i_1}, \ldots, \lambda_{i_n}].
$$
\end{thm}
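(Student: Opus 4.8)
The plan is to start from the closed form for the G\^ateaux derivatives in Proposition \ref{prop:sa-n-der} and to rewrite each bracket $\langle 1, B_1, \ldots, B_k \rangle_k$, with $B_i = (1-\eps_i)\{D,A\} + \eps_i A^2$, in the eigenbasis $\{\psi_i\}$ of $D$. Since every heat factor $e^{-s_i t D^2}$ is diagonal, inserting a resolution of the identity between consecutive operators turns the trace in \eqref{brackets} into a sum over indices of products of matrix coefficients, weighted by a product of Gaussians $e^{-s_i t \lambda^2}$, one for each simplex variable, evaluated at the eigenvalues threading the index string. The identity operator in the leading slot forces two neighbouring indices to coincide, which is the origin of the repeated eigenvalue $\lambda_{i_p}$; a factor $\{D,A\}$ contributes $(\lambda_i + \lambda_j) A_{ij}$, whereas a factor $A^2$ contributes $\sum_m A_{im} A_{mj}$, that is, one extra summation index that will turn out to be skipped.

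I would then evaluate the simplex integral. Writing $G(x) = e^{-tx}$ and applying Hermite's formula (Proposition \ref{hermite}) to $G$ gives $\int_{\Delta_k} e^{-t(s_0 y_0 + \cdots + s_k y_k)}\, d^k s = (-t)^{-k} G[y_0, \ldots, y_k]$, so the Gaussian integral over the inflated simplex becomes a divided difference of $e^{-t\,\cdot}$ in the squared eigenvalues. The prefactor $t^k$ of \eqref{brackets}, this factor $(-t)^{-k}$, and the explicit sign $(-1)^k$ of Proposition \ref{prop:sa-n-der} multiply to $+1$, so all signs disappear. Integrating against $d\mu(t)$ replaces $e^{-t\,\cdot}$ by $g(x) = \int_{t>0} e^{-tx}\, d\mu(t)$, for which $f(\lambda) = g(\lambda^2)$, exactly the situation of Proposition \ref{prop:div-diff:f-g}.

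The heart of the argument is then a combinatorial identification. For a fixed cyclic monomial $A_{i_n i_1} A_{i_1 i_2} \cdots A_{i_{n-1} i_n}$ I would unroll the cycle, starting and ending at the seam where the identity sits, into the ordered list of $n+1$ eigenvalues $(\lambda_{i_n}, \lambda_{i_1}, \ldots, \lambda_{i_{n-1}}, \lambda_{i_n})$, whose repeated endpoint $\lambda_{i_n} = \lambda_{i_p}$ is precisely the eigenvalue duplicated by the identity. After expanding each $A^2$ into a pair of factors $A$, the multi-indices $\vec\eps$ of Proposition \ref{prop:sa-n-der} are in bijection with the index sets $I$ of Proposition \ref{prop:div-diff:f-g}: a slot with $\eps = 0$ retains its index and contributes a factor $\lambda_i + \lambda_j$, i.e. a consecutive pair of $I$, while a slot with $\eps = 1$ contributes an intermediate index $m$ that is absent from the divided difference of $g$, i.e. a single-index gap in $I$. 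Because each $A^2$ carries exactly one such intermediate index flanked by retained ones, two indices are never skipped in succession, so precisely the index sets with no gap longer than one occur, each exactly once. Proposition \ref{prop:div-diff:f-g} then collapses the entire $\vec\eps$-sum into the single divided difference $f[\lambda_{i_p}, \lambda_{i_1}, \ldots, \lambda_{i_n}]$, and summing over all $i_1, \ldots, i_n$ with the overall $n!$ yields the claim.

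The main obstacle, besides justifying the eigenbasis rearrangement and the interchange of the index sums with the simplex and $d\mu(t)$ integrals by means of Lemma \ref{lma:estimates} and Assumption \ref{ass:laplace}, is to establish this bijection together with the exact matching of prefactors: one must check that the factors $\lambda_i + \lambda_j$ produced by $\{D,A\}$ coincide with the consecutive-pair factors of Proposition \ref{prop:div-diff:f-g}, and that no spurious combinatorial multiplicity survives into the coefficient $n!$, in particular that the factor of two arising in the induction of Proposition \ref{prop:sa-n-der} is already absorbed there. Testing $n = 2$ and $n = 3$ against the displayed Examples makes the correspondence transparent and guards against sign and index slips. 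Finally, since divided differences are symmetric and the sum over $i_1, \ldots, i_n$ is cyclically invariant, the position $p$ of the duplicated eigenvalue does not affect the total, which is why the statement may be phrased with an unspecified repeated index $i_p$.
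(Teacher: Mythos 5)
Your proposal is correct and follows essentially the same route as the paper: expand the brackets of Proposition \ref{prop:sa-n-der} in the eigenbasis of $D$, convert the inflated-simplex integrals into divided differences of $g$ via Hermite's formula (Proposition \ref{hermite}), and identify the $\vec\eps$-sum with the index sets of Proposition \ref{prop:div-diff:f-g} through the bijection $i_j = i_{j-1}+1+\eps_j$. Your explicit $(\lambda_i+\lambda_j)$ factor for $\{D,A\}$ is the correct one (the paper's displayed computation misprints it as a difference), and your added care about interchanging sums and integrals via Lemma \ref{lma:estimates} and Assumption \ref{ass:laplace} only makes explicit what the paper leaves implicit.
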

A similar result was obtained in finite dimensions in \cite{Han06}. 
\begin{proof}
Proposition \ref{prop:sa-n-der} gives us an expression for $S_D^{(n)}$ in terms of the brackets $\langle \cdots \rangle$. We compute for these:
\begin{align*}
&(-1)^k\langle 1, (1-\eps_1) \{D,A\}+ \eps_1 A^2, \ldots, (1-\eps_k) \{D,A\}+ \eps_k A^2 \rangle_k ~d\mu(t) \\
& \qquad =(-1)^k \!\!\!\! \!\!\!\!  \sum_{i_0=i_k, i_1,\ldots, i_k} \int_{\Delta_k} \left( \prod_{j=1}^k \left((1-\eps_j) (\lambda_{i_{j-1}} - \lambda_{i_j}) A+ \eps_j A^2 \right)_{i_{j-1} i_j} \right) \\
  & \qquad \qquad \qquad \qquad \qquad \qquad \qquad \qquad \times e^{-( s_0t \lambda_{i_0}^2 + \cdots + s_k t\lambda_{i_k}^2)} d^k s d\mu(t) \\
& \qquad = 
 \sum_{i_0=i_k, i_1,\ldots, i_k} \left( \prod_{j=1}^k \left((1-\eps_j) (\lambda_{i_{j-1}} - \lambda_{i_j}) A+ \eps_j A^2 \right)_{i_{j-1} i_j} \right) g[\lambda_{i_0}^2, \ldots, \lambda_{i_k}^2].
\end{align*}
Glancing back at Proposition \ref{prop:div-diff:f-g} we are finished if we establish a one-to-one relation between the order index sets $I=\{ 0 = i_0 < i_1 < \cdots < i_k = n\}$ such that $i_{j-1} - i_j \leq 2$ for all $1 \leq j \leq k$ and the multi-indices $(\eps_1, \ldots, \eps_k) \in \{ 0,1\}^k$ such that $\sum_{i=1}^k (1+\eps_i) = n$. If $I$ is such an index set, we define a multi-index:
$$
\eps_j = \left\{ \begin{array}{ll} 0 &\text{if } \{ i_j-1,i_j\} \subset I ,\\
1 &\text{otherwise.} \\ \end{array} \right.
$$
Indeed, then $i_j = i_{j-1} +1 + \eps_j$ so that 
$$
\sum_{i=1}^k (1+\eps_i) = i_0 + \sum_{i=1}^k (1+\eps_i) = i_k = n.
$$
It is now clear that, vice-versa, if $\eps$ is as above, we define $I= \{ 0 = i_0 < i_1 < \cdots < i_k = n\}$ by $i_j = i_{j-1}+1 + \eps_j$ and starting with $i_0=0$.
\end{proof}

\begin{corl}
If $n\geq 0$ and $A \in \B^2(\A)$, then
$$
S_D^{(n)}(0)(A,\ldots,A) = (n-1)!\sum_{i_1, \ldots i_n} A_{i_1 i_2}\cdots A_{i_n i_1} f' [\lambda_{i_1},\ldots,\lambda_{i_n}].
$$
Consequently, 
$$
S_D[A] = \sum_{n=0}^\infty \frac{1}{n} \sum_{i_1, \ldots i_n} A_{i_1 i_2}\cdots A_{i_n i_1} f' [\lambda_{i_1},\ldots,\lambda_{i_n}].
$$
\end{corl}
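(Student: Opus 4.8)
The plan is to deduce the corollary directly from Theorem \ref{thm:sa-n-der-dd} by combining two ingredients: the cyclic invariance of the matrix product and the identity relating a doubled-point divided difference of $f$ to a divided difference of $f'$. Writing $P(i_1,\ldots,i_n) := A_{i_1 i_2}\cdots A_{i_n i_1}$, I first note that since the matrix entries are scalars, $P$ coincides with the product $A_{i_n i_1}A_{i_1 i_2}\cdots A_{i_{n-1}i_n}$ appearing in Theorem \ref{thm:sa-n-der-dd}, and moreover that $P$ is invariant under the cyclic shift $(i_1,\ldots,i_n)\mapsto(i_2,\ldots,i_n,i_1)$ of its arguments. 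Theorem \ref{thm:sa-n-der-dd} thus reads
$$
S_D^{(n)}(0)(A,\ldots,A) = n!\sum_{i_1,\ldots,i_n} P(i_1,\ldots,i_n)\, f[\lambda_{i_n},\lambda_{i_1},\ldots,\lambda_{i_n}],
$$
in which the eigenvalue $\lambda_{i_n}$ occurs twice. The goal is to show the right-hand side equals $(n-1)!\sum_{\vec\imath} P(\vec\imath)\,f'[\lambda_{i_1},\ldots,\lambda_{i_n}]$, i.e.\ to trade the factor $n!$ for $(n-1)!$ while passing from $f$ with a doubled point to $f'$.

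The key identity is the easy consequence of Hermite's representation (Proposition \ref{hermite}) recorded above: applied to the $n$ points $\lambda_{i_1},\ldots,\lambda_{i_n}$ it gives
$$
f'[\lambda_{i_1},\ldots,\lambda_{i_n}] = \sum_{j=1}^n f[\lambda_{i_1},\ldots,\lambda_{i_j},\lambda_{i_j},\ldots,\lambda_{i_n}],
$$
where the $j$-th term is the divided difference of $f$ on the original points together with one extra copy of $\lambda_{i_j}$. I would multiply this by $P(\vec\imath)$ and sum over $\vec\imath$, producing $n$ groups of terms indexed by $j$. For each fixed $j$ I then relabel the summation variables by the cyclic shift carrying position $j$ to position $n$; since $P$ is cyclically invariant the weight is unchanged, and since divided differences are symmetric in their arguments the $j$-th summand becomes exactly $f[\lambda_{i_n},\lambda_{i_1},\ldots,\lambda_{i_n}]$. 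Hence all $n$ groups collapse to the same quantity, giving
$$
\sum_{\vec\imath} P(\vec\imath)\, f'[\lambda_{i_1},\ldots,\lambda_{i_n}] = n\sum_{\vec\imath} P(\vec\imath)\, f[\lambda_{i_n},\lambda_{i_1},\ldots,\lambda_{i_n}].
$$
Substituting into the displayed form of Theorem \ref{thm:sa-n-der-dd} and using $n!/n=(n-1)!$ yields the first claim.

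The second claim then follows by inserting this into the Taylor expansion \eqref{eq:taylor-action}: the factor $\tfrac{1}{n!}$ from Taylor's formula combines with $(n-1)!$ to give $\tfrac1n$, so that
$$
S_D[A] = \sum_{n} \frac{1}{n}\sum_{i_1,\ldots,i_n} A_{i_1 i_2}\cdots A_{i_n i_1}\, f'[\lambda_{i_1},\ldots,\lambda_{i_n}],
$$
with the $n=0$ contribution understood separately as $S_D(0)=\tr f(D)$. The main obstacle I anticipate is the bookkeeping in the cyclic relabeling step: one must verify that the cyclic invariance of $P$ and the permutation symmetry of the divided difference together force each of the $n$ doubled-point terms to coincide with the single summand of Theorem \ref{thm:sa-n-der-dd}, so that the sum over $j$ merely produces the overall factor $n$. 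Convergence of the resulting series, and the legitimacy of interchanging the summation over eigenbasis indices with the $t$-integration implicit in the brackets, is controlled by Assumption \ref{ass:laplace} together with the estimates of Lemma \ref{lma:estimates}.
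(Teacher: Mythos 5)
Your argument is correct and is essentially the paper's own (here left implicit) proof: the identity $\sum_{i=0}^{n} f[x_0,\ldots,x_i,x_i,\ldots,x_n]=f'[x_0,\ldots,x_n]$ recorded after Proposition \ref{hermite} was set up precisely for this step, and your cyclic relabeling of the summation indices (after correctly reading the typo $\lambda_{i_p}$ in Theorem \ref{thm:sa-n-der-dd} as the doubled point $\lambda_{i_n}$) together with the count $n!/n=(n-1)!$ is the intended derivation. Your separate handling of the $n=0$ term and the appeal to Assumption \ref{ass:laplace} and Lemma \ref{lma:estimates} to justify the rearrangements are appropriate.
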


An interesting consequence is the following, which was obtained recently at first order for bounded operators \cite{GHJR09}. 
\begin{corl}
If $n\geq 0$ and $A \in \B^2(\A)$ and if $f'$ has compact support, then
$$
S_D^{(n)}(0)(A,\ldots,A) = \frac{(n-1)!}{2\pi i} \tr \oint f'(z)A  (z-D)^{-1} \cdots A  (z-D)^{-1}.
$$
The contour integral encloses the intersection of the spectrum of $D$ with $\supp f'$.
\end{corl}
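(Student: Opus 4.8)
The plan is to start from the preceding Corollary, which already expresses $S_D^{(n)}(0)(A,\ldots,A)$ as $(n-1)!$ times the sum $\sum_{i_1,\ldots,i_n} A_{i_1 i_2}\cdots A_{i_n i_1}\, f'[\lambda_{i_1},\ldots,\lambda_{i_n}]$, and to convert each divided difference into a contour integral of a product of resolvents. First I would recall the classical residue representation of divided differences: for distinct $\lambda_{i_1},\ldots,\lambda_{i_n}$ one has the explicit formula
$$
f'[\lambda_{i_1},\ldots,\lambda_{i_n}] = \sum_{m=1}^n \frac{f'(\lambda_{i_m})}{\prod_{j\neq m}(\lambda_{i_m}-\lambda_{i_j})},
$$
whose right-hand side is exactly the sum of residues of $z\mapsto f'(z)\prod_{j}(z-\lambda_{i_j})^{-1}$ at the points $\lambda_{i_j}$. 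Hence, for a positively oriented contour $\Gamma$ enclosing all the $\lambda_{i_j}$,
$$
f'[\lambda_{i_1},\ldots,\lambda_{i_n}] = \frac{1}{2\pi i}\oint_\Gamma \frac{f'(z)}{\prod_{j=1}^n (z-\lambda_{i_j})}\,dz,
$$
with coinciding eigenvalues handled by noting that a pole of order $r$ contributes an $(r-1)$-th derivative, matching the confluent definition of the divided difference in Definition~\ref{defn:div-diff}.

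Next I would identify the product of resolvents. In the eigenbasis $\{\psi_i\}$ the resolvent is diagonal, $((z-D)^{-1})_{kl}=\delta_{kl}(z-\lambda_k)^{-1}$, so a direct computation gives
$$
\tr\big[(A(z-D)^{-1})^n\big] = \sum_{i_1,\ldots,i_n} A_{i_1 i_2}\cdots A_{i_n i_1}\prod_{j=1}^n (z-\lambda_{i_j})^{-1}.
$$
Multiplying by $f'(z)$, integrating over $\Gamma$, and interchanging the (absolutely convergent) sum with the contour integral then reproduces exactly the divided-difference sum of the preceding Corollary, yielding the claimed identity with prefactor $(n-1)!/(2\pi i)$. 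The points requiring justification are: (i) the contour $\Gamma$ should enclose $\mathrm{spec}(D)\cap\supp f'$, which is a \emph{finite} set because $D$ has compact resolvent, so only finitely many eigenvalues sit inside $\Gamma$; (ii) terms in which some $\lambda_{i_j}$ lies outside $\Gamma$ must be shown not to contribute, which is where the compact support of $f'$ enters; and (iii) the interchange of summation, trace, and contour integration, legitimate for $A\in\B^2(\H)$ by the estimates underlying Theorem~\ref{thm:sa-n-der-dd}.

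The main obstacle I expect is reconciling the compact-support hypothesis on $f'$ with the need to evaluate $f'$ along a complex contour, since a nonzero compactly supported function cannot itself be holomorphic. The cleanest resolution is the residue viewpoint: the contour integral is to be read as the finite sum of residues at the enclosed eigenvalues, so that only the real values $f'(\lambda_{i_j})$ actually enter and the identity is the Lagrange/partial-fraction formula for the divided difference; alternatively one invokes an almost-analytic (Helffer--Sjöstrand) extension of $f'$ and checks that the $\bar\partial$-term drops out. Verifying that the residue bookkeeping at higher-order poles matches the confluent divided differences, and that $\Gamma$ may be taken to enclose exactly $\mathrm{spec}(D)\cap\supp f'$ without discarding contributing terms, is the part that needs the most care.
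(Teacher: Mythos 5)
Your proposal is correct and takes essentially the same route as the paper, whose entire proof is to combine the preceding Corollary with Cauchy's integral formula for divided differences, $g[x_0,\ldots,x_n]=\frac{1}{2\pi i}\oint g(z)\,\big((z-x_0)\cdots(z-x_n)\big)^{-1}dz$ (cited from Donoghue), exactly as you do via the residue/Lagrange form together with the eigenbasis expansion of $\tr\big[(A(z-D)^{-1})^n\big]$. Your additional observations --- that a compactly supported $f'$ cannot be holomorphic, so the contour integral should be read as the finite residue sum at the eigenvalues in $\mathrm{spec}(D)\cap\supp f'$, with confluent points handled by higher-order poles --- address a subtlety the paper's one-line proof leaves implicit.
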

\begin{proof}
This follows directly from Cauchy's formula for divided differences (cf. \cite[Ch. I.1]{Don74})
$$
g[x_0, \ldots x_n] = \frac{1}{2\pi i} \oint \frac{g(z) }{(z-x_0)\cdots (z-x_n)} dz
$$
with the contour enclosing the points $x_i$. 
\end{proof}

\section{Outlook}

We have obtained a Taylor expansion for the spectral action in noncommutative geometry. As such, it is natural to consider its quadratic part as the starting point for a free quantum field theory. Expectedly, this involves the usual nuances of a gauge theory such as gauge fixing, Gribov ambiguities, {\it et cetera}. Under the assumption of vanishing tadpole 
$$
S_D^{(1)}(A) = 0; \qquad (A \in \Omega^1(\A)), 
$$
also exploited in \cite{CC06}, one indeed encounters a degeneracy in the quadratic part. In fact, in this case $S_D^{(2)}(A,[D,a]) = 0$ for all $a \in \A$. This vanishing on pure gauge fields will be considered in more detail elsewhere. Once this issue has been dealt with, the higher derivatives of the spectral action account for interactions, allowing for a development of a perturbative quantization of the spectral action. 

Another application of the present work is to matrix models, as our Taylor expansion is very similar to Lagrangians encountered in matrix models. In fact, if the spectral triple is $(M_N(\C), \C^N, D)$ with $D$ a symmetric $N \times N$-matrix, then the spectral action is exactly the hermitian one-matrix model (cf. \cite{FGZ95}). An honest infinite-dimensional example might be provided by the spectral triples that are involved in Moyal deformations (see \cite{Wul06} and references therein). It would be interesting to apply the above results and develop a quantum theory for these models.

\subsection*{Acknowledgements}
The author would like to thank Alain Connes and Dirk Kreimer for stimulating discussions. The Institut de Hautes \'Etudes Scientifique in Bures-sur-Yvette is thanked for providing a great scientific atmosphere during visits in 2009 and 2010. This work is part of the NWO VENI-project 639.031.827.

\appendix

\section{A theorem by Getzler and Szenes}
\label{sect:theoremC}
In \cite{GS89} Getzler and Szenes proof the following theorem. For completeness, we repeat it here (specified to our finitely-summable case).
\begin{thm}[Getzler-Szenes]
Let $(\A,\H,D)$ be a finitely-summable spectral triple and $V$ a selfadjoint bounded operator on $\H$. Then $(\A,\H, D_V)$ with $D_V = D+V$ is a finitely-summable spectral triple, and
$$
\tr e^{-(1-\epsilon/2) t (D_V)^2} \leq e^{(1+2/\epsilon) t\|V\|^2 } \tr e^{-(1-\epsilon)t D^2}
$$
for any $0< \epsilon<1$ and $t>0$.
\end{thm}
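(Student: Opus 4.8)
The plan is to reduce the trace inequality to a single quadratic-form bound comparing $(D_V)^2$ with $D^2$, and then to deduce the trace estimate from monotonicity of $\tr e^{-(\cdot)}$ with respect to the operator order. Since $V$ is bounded and self-adjoint, $D_V=D+V$ is self-adjoint on $\dom(D)$, so $(D_V)^2$ is well defined and all quadratic forms below live on the common form domain $\dom(D)$; moreover $D_V$ has compact resolvent because a bounded self-adjoint perturbation preserves compactness of the resolvent, and $[D_V,a]=[D,a]+[V,a]$ is bounded for $a\in\A$, so $(\A,\H,D_V)$ is again a spectral triple.

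For the form bound I would complete the square. For $\psi\in\dom(D)$ and any $\lambda>0$,
$$
\|D_V\psi\|^2 = \|D\psi\|^2 + 2\,\mathrm{Re}\,(D\psi,V\psi) + \|V\psi\|^2,
$$
and the arithmetic--geometric inequality $2|(D\psi,V\psi)|\leq \lambda\|D\psi\|^2 + \lambda^{-1}\|V\psi\|^2$ gives the form inequality $(D_V)^2 \geq (1-\lambda)D^2 + (1-\lambda^{-1})V^2$. I would then take $\lambda=\epsilon/(2-\epsilon)\in(0,1)$, the value for which $(1-\epsilon/2)(1-\lambda)=1-\epsilon$; since $1-\lambda^{-1}<0$, the bound $V^2\leq\|V\|^2$ lets me replace the last term by $(1-\lambda^{-1})\|V\|^2$ without spoiling the inequality. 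Multiplying by $(1-\epsilon/2)t>0$ and computing the resulting scalar $c(\epsilon):=2/\epsilon+\epsilon-3$ yields
$$
(1-\epsilon/2)\,t\,(D_V)^2 \geq (1-\epsilon)\,t\,D^2 - c(\epsilon)\,t\,\|V\|^2,
$$
and an elementary check shows $c(\epsilon)\leq 1+2/\epsilon$ for all $0<\epsilon<1$.

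To pass from this operator bound to the trace estimate I would invoke the min--max principle. Both sides of the last display are self-adjoint with discrete spectrum, so writing $\mu_k(\cdot)$ for the ordered eigenvalues the form inequality gives $\mu_k\big((1-\epsilon/2)tD_V^2\big)\geq\mu_k\big((1-\epsilon)tD^2-c(\epsilon)t\|V\|^2\big)$ for every $k$. As $s\mapsto e^{-s}$ is decreasing, summing over $k$ gives
$$
\tr e^{-(1-\epsilon/2)tD_V^2} \leq \tr e^{-(1-\epsilon)tD^2+c(\epsilon)t\|V\|^2} = e^{c(\epsilon)t\|V\|^2}\,\tr e^{-(1-\epsilon)tD^2},
$$
and replacing $c(\epsilon)$ by the larger $1+2/\epsilon$ gives the stated inequality. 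The right-hand side is finite by the $\theta$-summability \eqref{eq:thetasumm}, so the left-hand side is finite as well; the same eigenvalue comparison shows $\mu_k(1+D_V^2)\gtrsim\mu_k(1+D^2)$ for large $k$, whence $(1+D_V^2)^{-n/2}$ is trace class once $(1+D^2)^{-n/2}$ is, giving finite summability of $(\A,\H,D_V)$.

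The algebra in the first two paragraphs is routine. \textbf{The substantive step is the trace monotonicity} $X\geq Y\Rightarrow \tr e^{-X}\leq \tr e^{-Y}$ for the unbounded operators at hand: here $e^{-(\cdot)}$ is \emph{not} operator monotone, so I would be careful to formulate the comparison as a form inequality on $\dom(D)$ and to rely only on the min--max monotonicity of the individual eigenvalues, which is exactly what survives and is enough after applying the decreasing scalar function $e^{-s}$ and summing.
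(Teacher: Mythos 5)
Your proof is correct, and at its core it rests on the same algebra as the paper's, but it is packaged differently in ways worth comparing. The algebraic heart is identical: the paper's positivity of $(\sqrt{\epsilon t/2}\,D+\sqrt{2t/\epsilon}\,V)^2$, used to show its operator $B$ is positive, is exactly your arithmetic--geometric bound on the cross term $t(DV+VD)$, with the fixed parameter $\lambda=\epsilon/2$ in place of your optimized $\lambda=\epsilon/(2-\epsilon)$. The packaging differs: the paper splits $(1-\epsilon/2)tD_V^2+(1+2/\epsilon)t\|V\|^2=A+B$ with $A=(1-\epsilon)tD^2$ and $B\geq 0$, and then \emph{cites without proof} the inequality $\tr e^{-A-B}\leq \tr e^{-A}$ for positive self-adjoint operators, whereas you derive the single form inequality $(1-\epsilon/2)tD_V^2\geq(1-\epsilon)tD^2-c(\epsilon)t\|V\|^2$ on $\dom(D)$ and justify the passage to traces by min--max monotonicity of the individual eigenvalues followed by the decreasing scalar function $e^{-s}$. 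Since for noncommuting unbounded operators the paper's cited fact is itself established by exactly this min--max mechanism (your caution that $e^{-x}$ is not operator monotone is well placed --- nothing stronger is available), your version makes explicit the one step the paper leaves as a black box. Your route also buys two concrete improvements: the optimized $\lambda$ yields the slightly sharper constant $c(\epsilon)=2/\epsilon+\epsilon-3\leq 1+2/\epsilon$, where the paper's choice carries visible slack; and your closing comparison $\mu_k(1+D_V^2)\gtrsim\mu_k(1+D^2)$ verifies the finite-summability clause of the theorem, which the paper's proof does not address at all (it only establishes the heat-trace bound). What the paper's formulation buys in exchange is brevity and a cited inequality that is reusable as stated.
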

\begin{proof}
This follows from the fact that for two positive self-adjoint operator $A$ and $B$ we have
\begin{equation}
\label{eq:est-AB}
\tr e^{-A-B} \leq \tr e^{-A}.
\end{equation}
Indeed, let
\begin{align*}
A&= (1-\epsilon)t D^2\\
B&= \epsilon t D^2 /2 + (1-\epsilon/2) t (D V + V D + V^2) + (1+2/\epsilon)t \| V\|^2
\end{align*}
so that $A+B = (1-\epsilon/2) (D+V)^2 + (1+2/\epsilon) \| V\|^2$. Obviously, $A$ is positive. To see that $B$ is positive, we use the fact that
$$
0 \leq \epsilon t D^2/2 + 2 t V^2/\epsilon+t (DV+VD),
$$
which is just positivity of $(\sqrt{\epsilon t/2} D + \sqrt{2 t /\epsilon} V)^2$. Combining this with $V^2 \leq \| V^2 \|$ and multiplying by the positive number $(1-\epsilon/2)$ we obtain
$$
0 \leq (1-\epsilon/2) \left(\epsilon t D^2/2 + 2 t \| V\|^2 /\epsilon+ t (DV+VD) \right)
= B- \epsilon^2/4 t D^2 - (1-\epsilon/2)t V^2,
$$
ensuring positivity of $B$. Equation \eqref{eq:est-AB} then implies
$$
\tr e^{-(1-\epsilon/2) t (D^2 + DV+VD+V^2)} e^{-(1+2/\epsilon) t \| V\|^2}\leq  \tr e^{-(1-\epsilon)t D^2}
$$
as desired.
\end{proof}

\newcommand{\noopsort}[1]{}\def\cprime{$'$}

\end{document}